\newtheorem{thm}{Theorem}[section]
\newtheorem{lem}[thm]{Lemma}
\newtheorem{prop}[thm]{Proposition}
\newtheorem{cor}[thm]{Corollary}
\newtheorem*{theorem*}{Theorem}
\theoremstyle{remark}
\newtheorem{rem}[thm]{Remark}
\newtheorem{defn}[thm]{Definition}
\numberwithin{equation}{section}
\newcommand{\Z}{\mathbb{Z}}
\newcommand{\N}{\mathbb{N}}
\newcommand{\C}{\mathbb{C}}
\newcommand{\g}{\mathcal{G}_{G,\Lambda}}
\newcommand{\La}{\Lambda}
\newcommand{\la}{\lambda}
\newcommand{\ep}{\mathrm{EP}_R(G,\Lambda)}
\newcommand{\e}{\mathcal{E}}
\newcommand{\f}{\mathbb{F}_R(X)}
\newcommand{\lag}{\Lambda \bowtie G}
\newcommand{\q}{\mathcal{Q}_R^{\mathrm{alg}}(\Lambda\bowtie G)}
\newcommand{\mj}{\mathcal{J}}
\begin{document}
\title[self-similar $k$-graph algebras]{Exel-Pardo algebras of self-similar $k$-graphs}

\author[Hossein Larki]{Hossein Larki}

\address{Department of Mathematics\\
Faculty of Mathematical Sciences and Computer\\
Shahid Chamran University of Ahvaz\\
P.O. Box: 83151-61357\\
Ahvaz\\
 Iran}
\email{h.larki@scu.ac.ir}


\date{\today }

\subjclass[2020]{16D70, 16W50}
\keywords{self-similar $k$-graph, Exel-Pardo algebra, groupoid algebra, ideal structure.}

\begin{abstract}
We introduce the Exel-Pardo $*$-algebra $\mathrm{EP}_R(G,\Lambda)$ associated to a self-similar $k$-graph $(G,\Lambda,\varphi)$. We also prove the $\mathbb{Z}^k$-graded and Cuntz-Krieger uniqueness theorems for such algebras and investigate their ideal structure. In particular, we modify the graded uniqueness theorem for self-similar 1-graphs, and then apply it to present $\mathrm{EP}_R(G,\Lambda)$ as a Steinberg algebra and to study the ideal structure.
\end{abstract}

\maketitle


\section{introduction}

To give a unified framework like graph $C^*$-algebras for the Katsura's \cite{kat08} and Nekrashevyche's algebras \cite{nek04,nek05}, Exel and Pardo introduced self-similar graphs and their $C^*$-algebras in \cite{exe17}. They then associated an inverse semigroup and groupoid model to this class of $C^*$-algebras and studied structural features by underlying self-similar graphs. Note that although only finite graphs are considered in \cite{exe17}, many of arguments and results may be easily generalized for countable row-finite graphs with no sources (see \cite{exe18,lar21} for example). Inspired from \cite{exe17}, Li and Yang in \cite{li18, li19} introduced self-similar action of a discrete countable group $G$ on a row-finite $k$-graph $\La$. They then associated a universal $C^*$-algebra $\mathcal{O}_{G,\La}$ to $(G,\La)$ satisfying specific relations.

The algebraic analogues of Exel-Pardo $C^*$-algebras, denoted by $\mathcal{O}_{(G,E)}^{\mathrm{alg}}$ in \cite{cla18} and by $L_R(G,E)$ in \cite{haz19}, were introduced and studied in \cite{cla18,haz19}. In particular, Hazrat et al. proved a $\Z$-graded uniqueness theorem, and gave a model of Steinberg algebras for $L_R(G,E)$ \cite{haz19}. The initial aims to write the present paper are to give a much easier proof for \cite[Theorem B]{haz19} (a groupoid model for $L_R(G,E)$) using the $\mathbb{Z}$-graded uniqueness theorem, and then to study the ideal structure. However, we do these here, among others, for a more general class of algebras associated to self-similar higher rank graphs $(G,\La)$, which is introduced in Section 2.

This article is organized as follows. Let $R$ be a unital commutative $*$-ring. In Section 2, we introduce a universal $*$-algebra $\ep$ of a self-similar $k$-graph $(G,\La)$ satisfying specific properties, which is called the \emph{Exel-Pardo algebra of $(G,\La)$}. Our algebras are the higher rank generalization of those in \cite[Theorem 1.6]{haz19} and the algebraic analogue of  $\mathcal{O}_{G,\La}$ \cite{li18,li19}. Moreover, this class of algebras includes many important known algebras such as the algebraic Katsura algebras \cite{haz19}, Kumjian-Pask  algebras \cite{pin13}, and the quotient boundary algebras $\q$ of a Zappa-Sz$\acute{\mathrm{e}}$p product $\lag$ introduced in Section 3.  In section 3, we give a specific example of Exel-Pardo algebras using boundary quotient algebras of semigroups. Indeed, for a single-vertex self-similar $k$-graph $(G,\La)$, the Zappa-Sz\'{e}p product $\lag$ is a cancellative semigroup. We prove that the quotient boundary algebra $\mathcal{Q}_R^{\mathrm{alg}}(\lag)$ (defined in Definition \ref{defn3.1}) is isomorphic to $\ep$. Section 4 is devoted to proving a graded uniqueness theorem for the Exel-Pardo algebras. Note that using the description in Proposition \ref{prop2.7} below, there is a natural $\Z^k$-grading on $\ep$. Then, in Theorem \ref{grad-uniq-thm}, a $\Z^k$-graded uniqueness theorem is proved for $\ep$ which generalizes and modifies \cite[Theorem A]{haz19}. In particular, we will see in Sections 5 and 6 that this modification makes it more applicable.

In Sections 5 and 6, we assume that our self-similar $k$-graphs are pseudo free (Definition \ref{defn5.1}). In Section 5, we prove that every Exel-Pardo algebra $\ep$ is isomorphic to the Steinberg algebra $A_R(\g)$, where $\g$ is the groupoid introduced in \cite{li18}. We should note that the proof of this result is completely different from that of \cite[Theorem B]{haz19}. Indeed, the main difference between the proof of Theorem \ref{steib-algeb-thm} and that of \cite[Theorem B]{haz19} is due to showing the injectivity of defined correspondence. In fact, in \cite[Theorem B]{haz19}, the authors try to define a representation for $\mathcal{S}_{(G,E)}$ in $\mathrm{EP}_R(G,E)$ while we apply our graded uniqueness theorem, Theorem \ref{grad-uniq-thm}. This gives us an easier proof for Theorem \ref{steib-algeb-thm}, even in the 1-graph case. Finally, in Section 6, we investigate the ideal structure of $\ep$. Using the Steinberg algebras, we can define a conditional expectation $\e$ on $\ep$ and then characterize basic, $\Z^k$-graded, diagonal-invariant ideals of $\ep$ by $G$-saturated $G$-hereditary subsets of $\La^0$. These ideals are exactly basic, $Q(\N^k,G)\rtimes_{\mathcal{T}}\Z^k$-graded ideals of $\ep$.

\subsection{Notation and terminology}
Let $\mathbb{N}=\{0,1,2,\ldots\}$. For $k\geq 1$, we regard $\mathbb{N}^k$ as an additive semigroup with the generators $e_1,\ldots,e_k$. We use $\leq $ for the partial order on $\mathbb{N}^k$ given by $m\leq n$ if and only if $m_i\leq n_i$ for $1\leq i\leq k$. We also write $m\vee n$ and $m\wedge n$ for the coordinate-wise maximum and minimum, respectively.

A \emph{$k$-graph} is a countable small category $\La=(\La^0,\La,r,s)$ equipped with a \emph{degree functor} $d:\La \rightarrow \N^k$ satisfying the unique factorization property: for $\mu\in \La$ and $m,n\in \N^k$ with $d(\mu)=m+n$, then there exist unique $\alpha,\beta\in \La$ such that $d(\alpha)=m$, $d(\beta)=n$, and $\mu=\alpha \beta$. We usually denote $\mu(0,m):=\alpha$ and $\mu(m,d(\mu)):=\beta$. We refer to $\La^0$ as the vertex set and define $\La^n:=\{\mu\in\La: d(\mu)=n\}$ for every $n\in \N^k$. For $A,B\subseteq \La$, define $AB=\{\mu\nu:\mu\in A,\nu\in B, ~\mathrm{and}~ s(\mu)=r(\nu)\}$. Also, for $\mu,\nu\in \La$, define $\La^{min}(\mu,\nu)=\{(\alpha,\beta)\in \La\times \La: \mu\alpha=\nu\beta, d(\mu\alpha)=d(\mu)\vee d(\nu)\}$.

We say that $\La$ is \emph{row-finite} if $v\La^n$ is finite for all $n\in \N^k$ and $v\in \La^0$. A \emph{source} in $\La$ is a vertex $v\in \La^0$ such that $v\La^{e_i}=\emptyset$ for some $1\leq i\leq k$.

{\bf Standing assumption.} Throughout the article, we work only with row-finite $k$-graphs without sources.

Let $\Omega_k:=\{(m,n)\in \N^k\times \N^k:m\leq n\}$. By defining $(m,n).(n,l):=(m,l)$, $r(m,n):=(m,m)$, and $s(m,n):=(n,n)$, then $\Omega_k$ is a row-finite $k$-graph without sources. A graph homomorphism $x:\Omega_k\rightarrow \La$ is called an \emph{infinite path of $\La$} with the range $r(x)=x(0,0)$, and we write $\La^\infty$ for the set of all infinite paths of $\La$.


\section{Exel-Pardo algebras of self-similar $k$-graphs}

In this section, we associate a $*$-algebra to a self-similar $k$-graph as the algebraic analogue of \cite[Definition 3.9]{li19}. Let us first review some definitions and notations.

Following \cite{haz19}, we consider $*$-algebras over $*$-rings. Let $R$ be a unital commutative $*$-ring. Recall that a \emph{$*$-algebra over $R$} is an algebra $A$ equipped with an involution such that $(a^*)^*=a$, $(ab)^*=b^* a^*$, and $(ra+b)^*=r^* a^*+b^*$ for all $a,b\in A$ and $r\in R$. Then $p\in A$ is called a \emph{projection} if $p^2=p=p^*$, and $s\in A$ a \emph{partial isometry} if $s=ss^*s$.

\begin{defn}[{\cite[Definition 3.1]{pin13}}]
Let $\La$ be a row-finite $k$-graph without sources. A \emph{Kumjian-Pask $\La$-family} is a collection $\{s_{\mu}:\mu\in \La\}$ of partial isometries in a $*$-algebra $A$ such that
\begin{enumerate}[(KP1)]
  \item $\{s_v:v\in \La^0\}$ is a family of pairwise orthogonal projections;
  \item $s_{\mu\nu}=s_\mu s_\nu$ for all $\mu,\nu\in\La$ with $s(\mu)=r(\nu)$;
  \item $s_\mu^* s_\mu=s_{s(\mu)}$ for all $\mu\in \La$; and
  \item $s_v=\sum_{\mu\in v\La^n}s_\mu s_\mu^*$ for all $v\in \La^0$ and $n\in \N^k$.
\end{enumerate}
\end{defn}

\subsection{Self-similar $k$-graphs and their algebras}
Let $\La$ be a row-finite $k$-graph without sources. An \emph{automorphism of $\La$} is a bijection $\psi:\La\rightarrow \La$ such that $\psi(\La^n)\subseteq \La^n$ for all $n\in \N^k$ with the properties $s\circ \psi=\psi \circ s$ and $r\circ \psi=\psi \circ r$. We denote by $\mathrm{Aut}(\La)$ the group of automorphisms on $\La$. Furthermore, if $G$ is a countable discrete group, an \emph{action} of $G$ on $\La$ is a group homomorphism $g\mapsto \psi_g$ from $G$ into $\mathrm{Aut}(\La)$.

\begin{defn}[{\cite{li18}}]\label{defn2.2}
Let $\La$ be a row-finite $k$-graph without sources and $G$ a discrete group with identity $e_G$. We say that a triple $(G,\La,\varphi)$ is a \emph{self-similar $k$-graph} whenever the following properties hold:
\begin{enumerate}
  \item $G$ acts on $\La$ by a group homomorphism $g\mapsto \psi_g$. We prefer to write $g\cdot \mu$ for $\psi_g(\mu)$ to ease the notation.
  \item $\varphi:G\times \La\rightarrow G$ is a 1-cocycle for the action $G\curvearrowright \La$ such that for every $g\in G$, $\mu,\nu\in \La$ and $v\in \La^0$ we have
  \begin{itemize}
    \item[(a)] $\varphi(gh,\mu)=\varphi(g,h \cdot \mu)\varphi(h,\mu)  \hspace{5mm}$ (the 1-cocycle property),
    \item[(b)] $g\cdot (\mu\nu)=(g\cdot \mu)(\varphi(g,\mu) \cdot \nu) \hspace{5mm}$ (the self-similar equation),
    \item[(c)] $\varphi(g,\mu\nu)=\varphi(\varphi(g,\mu),\nu)$, and
    \item[(d)] $\varphi(g,v)=g$.
  \end{itemize}
\end{enumerate}
For convenience, we usually write $(G,\La)$ instead of $(G,\La,\varphi)$.
\end{defn}

\begin{rem}
In \cite{li18}, the authors used the notation $g|_\mu$ for $\varphi(g,\mu)$. However, we would prefer to follow \cite{exe17,exe18,haz19} for writing $\varphi(g,\mu)$.
\end{rem}

\begin{rem}
If in equation (2)(a) of Definition \ref{defn2.2}, we set $g=h=e_G$, then we get $\varphi(e_G,\mu)=e_G$ for every $\mu\in \La$. Moreover, \cite[Lemma 3.5(ii)]{li18} shows that $\varphi(g,\mu)\cdot v=g\cdot v$ for all $g\in G$, $v\in \La^0$, and $\mu\in \La$.
\end{rem}

Now we generalize the definition of Exel-Pardo $*$-algebras \cite{haz19} to the $k$-graph case.

\begin{defn}\label{defn2.4}
Let $(G,\La)$ be a self-similar $k$-graph. An {\it Exel-Pardo $(G,\La)$-family} (or briefly {\it $(G,\La)$-family}) is a set
$$\{s_\mu:\mu\in \La\}\cup\{u_{v,g}:v\in\La^0,g\in G\}$$
in a $*$-algebra satisfying
\begin{enumerate}
  \item $\{s_\mu:\mu\in\La\}$ is a Kumjian-Pask $\La$-family,
  \item $u_{v,e_G}=s_v$ for all $v\in\La^0$,
  \item $u_{v,g}^*=u_{g^{-1} \cdot v ,g^{-1}}$ for all $v\in\La^0$ and $g\in G$,
  \item $u_{v,g}s_\mu=\delta_{v,g\cdot r(\mu)}s_{g\cdot \mu}u_{g\cdot s(\mu),\varphi(g,\mu)}$ for all $v\in\La^0$, $\mu\in\La$, and $g\in G$,
  \item $u_{v,g}u_{w,h}=\delta_{v,g\cdot w}u_{v,gh}$ for all $v,w\in\La^0$ and $g,h\in G$.
\end{enumerate}
Then the {\it Exel-Pardo algebra} $\ep$ is the universal $*$-algebra over $R$ generated by a $(G,\La)$-family $\{s_\mu,u_{v,g}\}$.
\end{defn}

Recall that the universality of $\ep$ means that for every $(G,\La)$-family $\{S_\mu,U_{v,g}\}$ in a $*$-algebra $A$, there exists a $*$-homomorphism $\phi:\ep \rightarrow A$ such that $\phi(s_\mu)=S_\mu$ and $\phi(u_{v,g})=U_{v,g}$ for all $v\in \La^0$, $\mu\in \La$, and $g\in G$. (See Subsection \ref{ss2.2} for the construction of $\ep$.)  Throughout the paper, we will denote by $\{s_\mu,u_{v,g}\}$ the $(G,\La)$-family generating $\ep$.

\subsection{The construction of $\ep$}\label{ss2.2}

Let $(G,\La)$ be a self-similar $k$-graph as in Definition \ref{defn2.2}. The following is a standard construction of a universal algebra $\ep$ subject to desired relations. Consider the set of formal symbols
$$S=\left\{S_\mu,S_\mu^*,U_{v,g},U_{v,g}^*:\mu\in \La, v\in \La^0, g\in G\right\}.$$
Let $X=w(S)$ be the collection of finite words in $S$. We equip the free $R$-module
$$\mathbb{F}_R(X):=\big\{\sum_{i=1}^l r_ix_i: l\geq 1, r_i\in R, x_i\in X\big\}$$
with the multiplication
$$\big(\sum_{i=1}^l r_i x_i\big)\big(\sum_{j=1}^{l'} s_j y_j\big):=\sum_{i,j} r_is_j x_iy_j,$$
and the involution
$$\big(\sum r_i x_i\big)^*:=\big(\sum r_i^* x_i^*\big)$$
where $x^*=s_l^*\ldots s_1^*$ for each $x=s_1\ldots s_l$. Then $\f$ is a $*$-algebra over $R$. If $I$ is the (two-sided and self-adjoint) ideal of $\f$ containing the roots of relations (1)-(5) in Definition \ref{defn2.4}, then the quotient $\f/I$ is the Exel-Pardo algebra $\ep$ with the desired universal property. Let us define $s_\mu:=S_\mu+I$ and $u_{v,g}:=U_{v,g}+I$ for every $\mu\in\La$, $v\in\La^0$, and $g\in G$. In case $(G,\La)$ is pseudo free (Definition \ref{defn5.1}), Theorem \ref{grad-uniq-thm} insures that all generators $\{s_\mu,u_{v,g}\}$ of $\ep$ are nonzero.

Proposition \ref{prop2.7} below describes the elements of $\ep$. First, see a simple lemma.

\begin{lem}\label{lem2.5}
Let $(G,\La)$ be a self-similar graph (as in Definition \ref{defn2.2}) and $\{S,U\}$ a $(G,\La)$-family. If $S_\mu U_{v,g} S_\nu^*\neq 0$ where $\mu,\nu\in \La$, $v\in \La^0$ and $g\in G$, then $s(\mu)=v=g\cdot s(\nu)$.
\end{lem}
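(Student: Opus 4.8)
The plan is to extract the conclusion $s(\mu)=v=g.s(\nu)$ directly from the defining relations (1)--(5) of a $(G,\La)$-family by inserting idempotents and using orthogonality of the vertex projections. First I would observe that, since $\{S_\nu\}$ is a Kumjian-Pask family, (CK3) gives $S_\nu^* = S_{s(\nu)}S_\nu^* = S_\nu^* S_\nu S_\nu^*$, so in particular $S_\nu^* = S_\nu^* S_{r(\nu)}$ reading the range projection on the other side; more useful is that $S_\mu = S_\mu S_{s(\mu)}$ and $U_{v,g}=U_{v,g}S_{g^{-1}.v}$ (the latter from relation (5) with $w=g^{-1}.v$, since $U_{v,g}S_{g^{-1}.v}=U_{v,g}U_{g^{-1}.v,e_G}=\delta_{v,g.(g^{-1}.v)}U_{v,g}=U_{v,g}$, and a symmetric computation on the left gives $U_{v,g}=S_v U_{v,g}$). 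Thus
\[
S_\mu U_{v,g} S_\nu^* \;=\; \bigl(S_\mu S_{s(\mu)}\bigr)\bigl(S_v U_{v,g} S_{g^{-1}.v}\bigr)\bigl(S_{s(\nu)} S_\nu^*\bigr).
\]

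Next I would collapse the adjacent vertex projections. The product $S_{s(\mu)} S_v$ appears in the middle: by (CK1) the $S_w$ for $w\in\La^0$ are pairwise orthogonal projections, so $S_{s(\mu)}S_v = \delta_{s(\mu),v} S_v$. Hence if $s(\mu)\neq v$ the whole expression vanishes, contradicting the hypothesis $S_\mu U_{v,g}S_\nu^*\neq 0$; therefore $s(\mu)=v$. Similarly, on the right we have the factor $S_{g^{-1}.v} S_{s(\nu)} = \delta_{g^{-1}.v,\,s(\nu)} S_{s(\nu)}$, again by (CK1); so $S_\mu U_{v,g}S_\nu^*\neq 0$ forces $g^{-1}.v = s(\nu)$, i.e. $v = g.s(\nu)$. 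Combining the two identities yields $s(\mu)=v=g.s(\nu)$, as desired.

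The only genuinely non-routine point is establishing the two one-sided absorption identities $U_{v,g}=S_v U_{v,g}=U_{v,g}S_{g^{-1}.v}$, and this is where I would be careful: the right-hand identity uses relation (5) with $(w,h)=(g^{-1}.v,e_G)$ together with relation (2), noting $g.(g^{-1}.v)=v$ so the Kronecker delta is $1$; the left-hand identity follows either from relation (5) with $(v,g)$ replaced by $(v,e_G)$ and then $(v,g)$, i.e. $S_v U_{v,g} = U_{v,e_G}U_{v,g} = \delta_{v,v}U_{v,g} = U_{v,g}$, or alternatively by applying $*$ to the right-hand identity via relation (3). Once these are in hand, everything else is just (CK1), (CK2), (CK3) and orthogonality, so I do not anticipate any obstacle beyond keeping the group actions on vertices straight (recalling from the Remark that $g.v\in\La^0$ whenever $v\in\La^0$, since $G$ acts by degree-preserving automorphisms).
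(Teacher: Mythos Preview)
Your proof is correct and follows essentially the same approach as the paper's: both arguments insert vertex projections next to $U_{v,g}$ (using $S_\mu=S_\mu S_{s(\mu)}$, $S_\nu^*=S_{s(\nu)}S_\nu^*$, and relation~(2)) and then invoke relation~(5) together with the orthogonality in (CK1) to force $s(\mu)=v$ and $v=g.s(\nu)$. The only cosmetic difference is that you isolate the absorption identities $U_{v,g}=S_vU_{v,g}=U_{v,g}S_{g^{-1}.v}$ as a preliminary step, whereas the paper applies relation~(5) directly to $U_{s(\mu),e_G}U_{v,g}$ and $U_{v,g}U_{s(\nu),e_G}$.
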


\begin{proof}
If $a=S_\mu U_{v,g} S_\nu^*$ is nonzero, then by Definition \ref{defn2.4} we can write
\begin{align*}
S_\mu U_{v,g}S_\nu^* &= S_\mu (S_{s(\mu)}U_{v,g} ) S_\nu^*\\
&= S_\mu(U_{s(\mu),e_G} U_{v,g}) S_\nu^*\\
&=S_\mu (\delta_{s(\mu),e_G\cdot v} U_{s(\mu),g}) S_\nu^*.
\end{align*}
Now, the hypothesis $a\neq 0$ forces $s(\mu)=v$. On the other hand, a similar computation gives
\begin{align*}
a&= S_\mu (U_{v,g} S_{s(\nu)}) S_\nu^*\\
&= S_\mu( U_{v,g} U_{s(\nu),e_G}) S_\nu^*\\
&=S_\mu (\delta_{v, g\cdot s(\nu)} U_{v,g}) S_\nu^*,
\end{align*}
and thus $v=g\cdot s(\nu)$.
\end{proof}

\begin{prop}\label{prop2.7}
Let $(G,\La)$ be a self-similar graph. Then
\begin{equation}\label{eq2.1}
\ep=\mathrm{span}_R\{s_\mu u_{s(\mu),g} s_\nu^*: g\in G,~ \mu,\nu\in \La,~ \mathrm{and}~ s(\mu)=g\cdot s(\nu)\}.
\end{equation}
\end{prop}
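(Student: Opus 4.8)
The plan is to show that the right-hand side of \eqref{eq2.1}, call it $M$, is a $*$-subalgebra of $\ep$ that contains all the generators; since $\ep$ is generated by the $(G,\La)$-family, this forces $\ep=M$. The reverse containment $M\subseteq \ep$ is immediate once we know the spanning set lies in $\ep$, which it does by construction. Note first that $M$ is $*$-closed: $(s_\mu u_{s(\mu),g}s_\nu^*)^*=s_\nu u_{s(\mu),g}^* s_\mu^*=s_\nu u_{g^{-1}.s(\mu),g^{-1}}s_\mu^*$ by (3) of Definition \ref{defn2.4}, and since $s(\mu)=g.s(\nu)$ we have $g^{-1}.s(\mu)=s(\nu)$, so this is again of the required form (with the roles of $\mu,\nu$ swapped and $g$ replaced by $g^{-1}$, and one checks $s(\nu)=g^{-1}.s(\mu)$ as needed). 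It also contains the generators: $s_\mu=s_\mu u_{s(\mu),e_G}s_{s(\mu)}^*$ using (CK2), (CK3) and $u_{v,e_G}=s_v$ together with $s(\mu)=e_G.s(\mu)$; and $u_{v,g}=s_v u_{v,g}s_{g^{-1}.v}^*$ by (CK1) and (2), provided $v=g.(g^{-1}.v)$, which holds since $G$ acts by automorphisms.

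The substantive step is closure under multiplication. I would compute a product $(s_\mu u_{s(\mu),g}s_\nu^*)(s_\alpha u_{s(\alpha),h}s_\beta^*)$ and show it is an $R$-linear combination of elements of the spanning form. The middle factor $s_\nu^* s_\alpha$ is handled by the standard $k$-graph identity coming from (CK1)--(CK4): one shows $s_\nu^* s_\alpha=\sum_{(\gamma,\delta)\in\La^{min}(\nu,\alpha)} s_\gamma s_\delta^*$ (this is the usual consequence of the Cuntz--Krieger relations in a Kumjian--Pask family, and I would either cite \cite{pin13} or derive it quickly from (CK2)--(CK4)). So the product becomes $\sum_{(\gamma,\delta)} s_\mu u_{s(\mu),g} s_\gamma s_\delta^* s_\alpha^{\prime}\cdots$ — more precisely $\sum s_\mu u_{s(\mu),g}(s_\gamma s_\delta^*) u_{s(\alpha),h} s_\beta^*$, where $\gamma,\delta$ range over $\La^{min}(\nu,\alpha)$, $r(\gamma)=s(\nu)$, $r(\delta)=s(\alpha)$. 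Now I push the $u$'s past the $s$'s using relation (4): $u_{s(\mu),g}s_\gamma=\delta_{s(\mu),g.r(\gamma)}\,s_{g.\gamma}\,u_{g.s(\gamma),\varphi(g,\gamma)}$, and on the other side $s_\delta^* u_{s(\alpha),h}=(u_{s(\alpha),h}^* s_\delta)^* = (u_{h^{-1}.s(\alpha),h^{-1}}s_\delta)^*$ which by (4) equals $(\delta_{h^{-1}.s(\alpha),\,h^{-1}.r(\delta)}\,s_{h^{-1}.\delta}\,u_{\cdot,\cdot})^*= \delta\cdots u_{\cdot,\cdot}^* s_{h^{-1}.\delta}^*$. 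After these moves each term has the shape $s_{\mu}s_{g.\gamma}\,u_{w_1,p}u_{w_2,q}\,s_{h^{-1}.\delta}^*s_\beta^*$; I collapse $s_\mu s_{g.\gamma}=s_{\mu(g.\gamma)}$ and $s_{h^{-1}.\delta}^* s_\beta^* = (s_\beta s_{h^{-1}.\delta})^* = s_{\beta(h^{-1}.\delta)}^*$ via (CK2) (the composability conditions on sources/ranges being exactly what the $\delta$-symbols enforce), and I collapse the two middle $u$'s using (5): $u_{w_1,p}u_{w_2,q}=\delta_{w_1,p.w_2}u_{w_1,pq}$. What remains is a scalar multiple of $s_{\mu(g.\gamma)}\,u_{s(\mu(g.\gamma)),\,pq}\,s_{\beta(h^{-1}.\delta)}^*$, and I check that $s(\mu(g.\gamma))=pq.s(\beta(h^{-1}.\delta))$ automatically, using Definition \ref{defn2.2}(b),(c) and Lemma \ref{lem2.5} (or the direct relations), so it lies in the spanning set. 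Summing over $\La^{min}(\nu,\alpha)$ keeps us in $M$.

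The main obstacle is bookkeeping: correctly tracking all the Kronecker $\delta$'s produced by relations (4) and (5) and verifying that whenever a term survives, the source/range and group-element conditions needed to apply (CK2) and to land in the prescribed form are automatically satisfied — in particular that the final source constraint $s(\mu(g.\gamma))=pq.s(\beta(h^{-1}.\delta))$ holds, which relies on chaining the cocycle identity \ref{defn2.2}(c) and the ``$\varphi(g,\mu).v=g.v$'' remark. There is no conceptual difficulty; it is the routine but somewhat lengthy verification that products of basic elements reduce, term by term, to linear combinations of basic elements. Once closure under multiplication, under $*$, and containment of generators are established, universality of $\ep$ (it is generated as a $*$-algebra by the $(G,\La)$-family) immediately gives $\ep=M$, completing the proof.
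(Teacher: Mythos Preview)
Your proposal is correct and follows essentially the same strategy as the paper: show that the span $M$ is a $*$-subalgebra of $\ep$ containing the generators $s_\mu$ and $u_{v,g}$, whence $M=\ep$; the final source condition $s(\mu)=g.s(\nu)$ is exactly what Lemma~\ref{lem2.5} supplies. The one noteworthy difference is in the multiplication step: the paper only treats the case where one of $\nu,\alpha$ extends the other and asserts the product is ``otherwise zero,'' which is not literally true for $k>1$; your use of the general identity $s_\nu^* s_\alpha=\sum_{(\gamma,\delta)\in\La^{\min}(\nu,\alpha)} s_\gamma s_\delta^*$ from \cite{pin13} is the cleaner and fully general way to close this step.
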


\begin{proof}
Define $M:=\mathrm{span}_R\{s_\mu u_{s(\mu),g} s_\nu^*: g\in G,~ \mu,\nu\in \La\}$. For every $g,h\in G$ and $\mu,\nu,\alpha,\beta\in \La$ with $\alpha=\nu\alpha'$ for some $\alpha'\in \La$, the relations of Definition \ref{defn2.4} imply that
\begin{align*}
\big(s_\mu u_{s(\mu),g} s_\nu^* \big) \big(s_\alpha u_{s(\alpha),h} s_\beta^* \big) &= s_\mu u_{s(\mu),g}(s_\nu^* s_\alpha) u_{s(\alpha),h} s_\beta^*\\
&= s_\mu u_{s(\mu),g}(s_{\alpha'}) u_{s(\alpha),h} s_\beta^* \\
&= s_\mu \big( \delta_{s(\mu),g\cdot r(\alpha')} s_{g\cdot \alpha'} u_{g\cdot s(\alpha'), \varphi(g,\alpha')} u_{s(\alpha),h}\big) s_\beta^* \\
&= \delta_{s(\mu),g\cdot s(\nu)} s_{\mu(g\cdot \alpha')} \big(\delta_{g\cdot s(\alpha'),\varphi(g,\alpha') \cdot s(\alpha)} u_{g\cdot s(\alpha'), \varphi(g,\alpha') h} \big) s_\beta^*  \hspace{5mm} (\mathrm{as}~~ r(\alpha')=s(\nu)).
\end{align*}

In the case $\nu=\alpha \nu'$ for some $\nu'\in \La$, the above multiplication may be computed similarly, and otherwise is zero. Hence, $M$ is closed under multiplication. Also, we have $$\big(s_\mu u_{s(\mu),g}s_\nu^* \big)^*= s_\nu u_{g^{-1} \cdot s(\mu), g^{-1}} s_\mu^*,$$
so $M^*\subseteq M$. Since
$$s_\mu=s_\mu u_{s(\mu),e_G} s_{s(\mu)}^* \hspace{5mm} \mathrm{and} \hspace{5mm} u_{v,g}=s_v u_{v,g} s_{g\cdot v}^*$$
for all $g\in G$, $v\in \La^0$, and $\mu\in \La$, it follows that $M$ is a $*$-subalgebra of $\ep$ containing the generators of $\ep$. In light of Lemma \ref{lem2.5}, this concludes the identification (\ref{eq2.1}).
\end{proof}

\subsection{The unital case}

In case $\La$ is a $k$-graph with finite $\La^0$, we may give a better description for Definition \ref{defn2.4}. Note that this case covers all unital Exel-Pardo algebras $\ep$.

\begin{lem}
Let $(G,\La)$ be a self-similar $k$-graph and let $s_v$ be nonzero in $\ep$ for every $v\in \La^0$. Then $\ep$ is a unital algebra if and only if the vertex set $\La^0$ is finite.
\end{lem}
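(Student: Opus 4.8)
The plan is to prove both implications of the iff separately, using the structural description of $\ep$ from Proposition \ref{prop2.7} throughout.

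First I would handle the easy direction: if $\La^0$ is finite, set $p:=\sum_{v\in\La^0}s_v$. This is a finite sum in $\ep$, and since the $\{s_v\}$ are pairwise orthogonal projections (CK1), $p$ is itself a projection. I claim $p$ is a two-sided identity. By Proposition \ref{prop2.7}, it suffices to check that $p$ acts as the identity on each spanning element $s_\mu u_{s(\mu),g}s_\nu^*$. For the left action, using (CK1) we get $p\, s_\mu = \sum_v s_v s_\mu = s_{r(\mu)}s_\mu = s_\mu$ (since $s_v s_\mu = \delta_{v,r(\mu)}s_\mu$, which follows from (CK2) together with $s_{r(\mu)}s_\mu = s_{r(\mu)\mu}=s_\mu$ and orthogonality); hence $p\cdot(s_\mu u_{s(\mu),g}s_\nu^*) = s_\mu u_{s(\mu),g}s_\nu^*$. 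For the right action, $(s_\mu u_{s(\mu),g}s_\nu^*)\,p = s_\mu u_{s(\mu),g}s_\nu^*\sum_v s_v = s_\mu u_{s(\mu),g}s_\nu^* s_{s(\nu)} = s_\mu u_{s(\mu),g}s_\nu^*$, using $s_\nu^* s_{s(\nu)} = (s_{s(\nu)}s_\nu)^* = s_\nu^*$. So $p$ is a unit and $\ep$ is unital.

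The converse is the main obstacle, since it requires ruling out the existence of \emph{any} identity when $\La^0$ is infinite, not just the natural candidate. Suppose $\ep$ has an identity $1$. Using Proposition \ref{prop2.7}, write $1 = \sum_{i=1}^n r_i\, s_{\mu_i}u_{s(\mu_i),g_i}s_{\nu_i}^*$, a finite sum, and let $F\subseteq\La^0$ be the (finite) set of all vertices occurring as a range $r(\mu_i)$ or $r(\nu_i)$. The key observation is that for $w\in\La^0$ with $w\notin F$ and $w$ not equal to any $g_i.r(\nu_i)$, one computes $s_w\cdot 1 = \sum_i r_i\, s_w s_{\mu_i}u_{s(\mu_i),g_i}s_{\nu_i}^* = 0$ by orthogonality of the vertex projections, since $s_w s_{\mu_i} = \delta_{w,r(\mu_i)}s_{\mu_i} = 0$. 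Enlarging $F$ to also include the finitely many vertices $g_i.r(\nu_i)$, we find a vertex $w\notin F$ (as $\La^0$ is infinite) with $s_w\cdot 1 = 0$; but $s_w\cdot 1 = s_w \neq 0$, and $s_w\neq 0$ is guaranteed whenever the generators are nonzero. This is the delicate point: a priori $s_w$ could be zero in $\ep$ without pseudo-freeness. I would circumvent this by instead testing $1$ against an infinite path: for any $w\in\La^0$ one has $s_w\neq 0$ because in the Kumjian-Pask relations (CK4) force $s_w = \sum_{\mu\in w\La^{e_1}}s_\mu s_\mu^*$ with the summands pairwise orthogonal partial isometries, and iterating down an infinite path together with a standard faithful representation (e.g. the representation on $\ell_R(\La^\infty)$ exhibiting a Kumjian-Pask family with all $s_v\neq 0$, cf. \cite{pin13}) shows every $s_w\neq0$; alternatively cite that $\ep$ admits the Kumjian-Pask algebra of $\La$ as a quotient (killing the $u_{v,g}$ for $g\neq e_G$), where $s_w\neq 0$ by \cite{pin13}. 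Either way we reach the contradiction $s_w = s_w\cdot 1 = 0$.

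In writing this up I would state the vertex-orthogonality facts $s_v s_\mu = \delta_{v,r(\mu)}s_\mu$ and $s_\mu^* s_v = \delta_{v,s(\mu)}s_\mu^*$ as a preliminary remark (immediate from (CK1), (CK2), (CK3)), so that both directions read cleanly, and I would be careful to invoke Proposition \ref{prop2.7} only to get the finiteness of the spanning expression for a putative unit. The only genuinely nontrivial input is the nonvanishing of the vertex projections $s_w$ in $\ep$, which I would pin down by the quotient-to-Kumjian-Pask-algebra argument above rather than by appealing to Theorem \ref{grad-uniq-thm}, since the latter needs the pseudo-free hypothesis that is not in force here.
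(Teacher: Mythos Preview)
Your forward direction matches the paper's. For the converse the paper takes a slightly different route---it builds $P = \lim_l \sum_{i=1}^l s_{v_i}$ in the multiplier algebra $\mathcal{M}(\ep)$ and asserts $P \notin \ep$ ``by eq.~(\ref{eq2.1})''---but this and your direct contradiction argument amount to the same finite-support obstruction via Proposition~\ref{prop2.7}, and both ultimately need $s_w \neq 0$ for infinitely many $w$. The paper simply glosses over this point, so you are right to flag it.

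However, your option (b) for securing $s_w \neq 0$ fails. There is no $*$-homomorphism $\ep \to \mathrm{KP}_R(\La)$ sending $s_\mu \mapsto t_\mu$ and $u_{v,g} \mapsto 0$ for $g \neq e_G$: relation~(5) of Definition~\ref{defn2.4} gives $u_{v,g}\, u_{g^{-1}.v,\,g^{-1}} = u_{v,e_G} = s_v$, so under the proposed map the left side goes to $0$ while the right side goes to $t_v \neq 0$. Thus $\mathrm{KP}_R(\La)$ is not a quotient of $\ep$ obtained by killing the nontrivial $u_{v,g}$.

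Option (a) is the correct route, but citing \cite{pin13} alone is not enough: that reference produces only a Kumjian--Pask $\La$-family on the free $R$-module over $\La^\infty$, not a $(G,\La)$-family. You need to extend it by setting $U_{v,g}(e_x) := \delta_{v,\, g.r(x)}\, e_{g.x}$ via the action $G \curvearrowright \La^\infty$ (cf.\ \cite[Lemma~3.7]{li18}) and then check relations (2)--(5) of Definition~\ref{defn2.4} for $\{S_\mu,U_{v,g}\}$; this is routine (relation~(4) uses the self-similar equation together with $\varphi(g,\mu).s(\mu)=g.s(\mu)$) but should be stated. Once this is in place, $S_v \neq 0$ since $v\La^\infty \neq \emptyset$, and universality gives $s_v \neq 0$ in $\ep$, completing your argument.
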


\begin{proof}
If $\La^0=\{v_1,\ldots, v_l\}$ is finite, then using identification (\ref{eq2.1}), $P=\sum_{i=1}^l s_{v_i}$ is the unit of $\ep$. Conversely, if $\La^0$ is infinite, then the set $\{s_v:v\in \La^0\}\subseteq \ep$ contains infinitely many mutually orthogonal projections. Now again by (\ref{eq2.1}), there is no element of $\ep$ which acts as an identity on each element of $\{s_v:v\in \La^0\}$.
\end{proof}

Note that if $\{s,u\}$ is a $(G,\La)$-family in a $*$-algebra $A$, then for each $g\in G$  we may define $u_g:=\sum_{v\in \La^0}u_{v,g}$ as an element of the multiplier algebra $\mathcal{M}(A)$ with the property $s_v u_g=u_{v,g}$ for all $v\in \La^0$ (relations (2) and (5) of Definition \ref{defn2.4} yield $s_vu_{w,g}=\delta_{v,w} u_{v,g}$). (See \cite{ara00} for the definition of multiplier algebras.) Thus relations (3) and (5) of Definition \ref{defn2.4} imply that $u:G\rightarrow \mathcal{M}(A)$, defined by $g \mapsto u_g$, is a unitary $*$-representation of $G$ on $\mathcal{M}(A)$. In particular, in case $\La^0$ is finite, $u_g$'s lie all in $A$, and we may describe Definition \ref{defn2.4} as the following:

\begin{prop}\label{prop2.9}
Let $(G,\La)$ be a self-similar $k$-graph. Suppose also that $\La^0$ is finite. Then $\ep$ is the universal $*$-algebra generated by families $\{s_\mu:\mu\in \La\}$ of partial isometries and $\{u_g:g\in G\}$ of unitaries satisfying
\begin{enumerate}
  \item $\{s_\mu:\mu\in \La\}$ is a Kumjian-Pask $\La$-family;
  \item $u:G\rightarrow \ep$, by $g\mapsto u_g$, is a unitary $*$-representation of $G$ on $\ep$, in the sense that
  \begin{itemize}
    \item[(a)] $u_g u_h=u_{gh}$ for all $g,h\in G$, and
    \item[(b)] $u^*_g= u^{-1}_g= u_{g^{-1}}$ for all $g\in G$;
  \end{itemize}
  \item $u_g s_\mu=s_{g\cdot \mu} u_{\varphi(g,\mu)}$ for all $g\in G$ and $\mu\in \La$.
\end{enumerate}
\end{prop}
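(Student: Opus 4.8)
The plan is to exhibit a bijective correspondence between $(G,\La)$-families in a $*$-algebra $A$ (in the sense of Definition \ref{defn2.4}) and pairs $(\{s_\mu\},\{u_g\})$ satisfying (1)–(3) above, and then invoke the two universal properties. First I would go from a $(G,\La)$-family $\{s_\mu,u_{v,g}\}$ to the new data: keep $\{s_\mu\}$ as is, and set $u_g:=\sum_{v\in\La^0}u_{v,g}$, which lies in $A$ precisely because $\La^0$ is finite. As already observed in the paragraph preceding the proposition, relations (2)+(5) give $s_v u_{w,g}=\delta_{v,w}u_{v,g}$, hence $s_v u_g=u_{v,g}$; relations (3)+(5) then make $g\mapsto u_g$ a unitary $*$-representation, giving (2a) and (2b). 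For (3) I would compute, using relation (4) of Definition \ref{defn2.4},
\begin{align*}
u_g s_\mu
&=\sum_{v\in\La^0} u_{v,g} s_\mu
=\sum_{v\in\La^0}\delta_{v,g.r(\mu)}\,s_{g.\mu}\,u_{g.s(\mu),\varphi(g,\mu)}
=s_{g.\mu}\,u_{g.s(\mu),\varphi(g,\mu)}\\
&=s_{g.\mu}\,s_{g.s(\mu)}\,u_{\varphi(g,\mu)}
=s_{g.\mu}\,u_{\varphi(g,\mu)},
\end{align*}
where the last line uses $s_{s(g.\mu)}=s_{g.s(\mu)}$ together with $s_\alpha s_{s(\alpha)}=s_\alpha$ (from (CK2)+(CK3)). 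So (1)–(3) hold.

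Conversely, given $\{s_\mu\}$ and $\{u_g\}$ satisfying (1)–(3), I would define $u_{v,g}:=s_v u_g$ and check the five relations of Definition \ref{defn2.4}. Relation (2) is $u_{v,e_G}=s_v u_{e_G}=s_v$ since $u_{e_G}$ is the identity (from (2a) with $g=h=e_G$, $u_{e_G}$ is an idempotent unitary, hence $1$; here "identity" means identity of the multiplier algebra, but $s_v u_{e_G}=s_v$ is all that is needed). Relation (5): $u_{v,g}u_{w,h}=s_v u_g s_w u_h$; applying (3) repeatedly, $u_g s_w=s_{g.w}u_{\varphi(g,w)}=s_{g.w}u_g$ (using $\varphi(g,w)=g$ for vertices $w$, Definition \ref{defn2.2}(2)(d)), so $u_{v,g}u_{w,h}=s_v s_{g.w} u_{gh}=\delta_{v,g.w}s_v u_{gh}=\delta_{v,g.w}u_{v,gh}$ by orthogonality of vertex projections (CK1). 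Relation (3): $u_{v,g}^*=(s_v u_g)^*=u_g^* s_v=u_{g^{-1}} s_v=s_{g^{-1}.v}u_{g^{-1}}=u_{g^{-1}.v,g^{-1}}$, again using (3) for the vertex $v$. Relation (4): $u_{v,g}s_\mu=s_v u_g s_\mu=s_v s_{g.\mu}u_{\varphi(g,\mu)}$ by hypothesis (3); since $r(g.\mu)=g.r(\mu)$, orthogonality of vertex projections gives $s_v s_{g.\mu}=\delta_{v,g.r(\mu)}s_{g.\mu}$, and finally $s_{g.\mu}u_{\varphi(g,\mu)}=s_{g.\mu}s_{s(g.\mu)}u_{\varphi(g,\mu)}=s_{g.\mu}u_{g.s(\mu),\varphi(g,\mu)}$, as required. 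It remains to note that these two assignments are mutually inverse: starting from $\{u_{v,g}\}$, forming $u_g=\sum_v u_{v,g}$ and then $s_v u_g$ returns $u_{v,g}$ by $s_v u_{w,g}=\delta_{v,w}u_{v,g}$; starting from $\{u_g\}$, forming $u_{v,g}=s_v u_g$ and summing over $v$ returns $\sum_v s_v u_g=u_g$ since $\sum_v s_v$ is the unit of $\ep$ (by the previous lemma, $\La^0$ finite) — this is the one point where finiteness of $\La^0$ is genuinely used.

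With the correspondence in place, the universal property follows formally: $\ep$ is generated by a $(G,\La)$-family with the universal mapping property, so via the bijection it is also generated by a pair $\{s_\mu\},\{u_g\}$ satisfying (1)–(3), and any such pair in any $*$-algebra $A$ corresponds to a $(G,\La)$-family in $A$, which factors uniquely through $\ep$; one checks the induced $*$-homomorphism sends the distinguished $\{s_\mu\},\{u_g\}$ of $\ep$ to the given ones, and uniqueness is automatic since these elements generate $\ep$. The only mild obstacle I anticipate is bookkeeping around multiplier-algebra subtleties — specifically making sure that wherever an "identity" or a sum $\sum_v s_v$ is invoked it genuinely lands in $\ep$, which is exactly why the hypothesis $|\La^0|<\infty$ is imposed; away from that the verifications are routine manipulations with the Kumjian-Pask relations and the cocycle identities of Definition \ref{defn2.2}.
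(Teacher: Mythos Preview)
Your proof is correct and is precisely the argument the paper has in mind; in fact the paper does not supply a formal proof of Proposition~\ref{prop2.9} at all, treating it as an immediate consequence of the paragraph preceding its statement (where $u_g:=\sum_{v\in\La^0}u_{v,g}$ is introduced and the key identity $s_v u_g=u_{v,g}$ is observed). Your write-up simply fleshes out that sketch in both directions and records the routine verifications the paper leaves to the reader.
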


\section{An example: The Zappa-Sz\'{e}p product $\lag$ and its $*$-algebra}

Let $(G,\La)$ be a self-similar $k$-graph such that $|\La^0|=1$. The $C^*$-algebra and quotient boundary $C^*$-algebra associated to the Zappa-Sz\'ep product $\lag$ as a semigroup were studied in \cite{bro14,li19-boundary}. In this section, we first define $\mathcal{Q}_R^{\mathrm{alg}}(S)$ as the algebraic analogue of the quotient boundary $C^*$-algebra $\mathcal{Q}(S)$ of a cancellative semigroup $S$. Then we show that $\q$ is isomorphic to the Exel-Pardo algebra $\ep$.

Let us recall some terminology from \cite{li12,bro14}. Let $S$ be a left-cancelative semigroup with an identity. Given $X\subseteq S$ and $s\in S$, define $sX:=\{sx:x\in X\}$ and $s^{-1}X:=\{r\in S:sr\in X\}$. Also, the set of \emph{constructible right ideals in $S$} is defined as
$$\mathcal{J}(S):=\{s^{-1}_1r_1\ldots s_l^{-1} r_l S: l\geq 1, ~ s_i,r_i\in S\}\cup \{\emptyset\}.$$
Then, a \emph{foundation set in $\mathcal{J}(S)$} is a finite subset $F\subseteq \mathcal{J}(S)$ such that for each $Y\in \mathcal{J}(S)$, there exists $X\in F$ with $X\cap Y\neq \emptyset$.

The following is the algebraic analogue of \cite[Definition 2.2]{li12}.

\begin{defn}\label{defn3.1}
Let $S$ be a left-cancelative semigroup and $R$ be a unital commutative $*$-ring. The \emph{boundary quotient $*$-algebra of $S$} is the universal unital $*$-algebra $\mathcal{Q}_R^{\mathrm{alg}}(S)$ over $R$ generated by a set of isometries $\{t_s:s\in S\}$ and a set of projections $\{q_X:X\in \mathcal{J}(S)\}$ satisfying
\begin{enumerate}
  \item $t_s t_r=t_{sr}$,
  \item $t_s q_X t_s^*=q_{sX}$,
  \item $q_S=1$ and $q_\emptyset=0$,
  \item $q_X q_Y=q_{X\cap Y}$, and moreover
  \item $\prod_{X\in F} (1-q_X)=0$
\end{enumerate}
for all $s,r\in S$, $X,Y\in \mathcal{J}(S)$, and foundation sets $F\subseteq \mathcal{J}(S)$.
\end{defn}

Let $\La$ be a $k$-graph such that $\La^0=\{v\}$. Then $\mu\nu$ is composable for all $\mu,\nu\in \La$, and hence $\La$ may be considered as a semigroup with the identity $v$. Also, the unique factorization property implies that $\La$ is cancelative.

\begin{defn}[{\cite[Definition 3.1]{bro14}}]
Let $(G,\La)$ is a single-vertex self-similar $k$-graph. If we consider $\La$ as a semigroup, then the \emph{Zappa-Sz\'ep product $\La\bowtie G$} is the semigroup $\La \times G$ with the multiplication
$$(\mu,g)(\nu,h):=(\mu(g\cdot \nu), \varphi(g,\nu) \cdot h) \hspace{5mm} (\mu,\nu\in \La ~ \mathrm{and}~ g,h\in G).$$
\end{defn}

\begin{rem}
If $\La$ is a single-vertex $k$-graph, then \cite[Lemma 3.2 (iv)]{li19-boundary} follows that
$$\mathcal{J}(\La)=\big\{\bigcup_{i=1}^l\mu_i \La : l\geq 1, \mu_i\in \La, d(\mu_1)=\cdots =d(\mu_l)\big\}.$$
\end{rem}

In order to prove Theorem \ref{thm3.6}, the following lemmas are useful.

\begin{lem}\label{lem3.4}
Let $(G,\La)$ be a self-similar $k$-graph with $\La^0=\{v\}$. Suppose that for each $\mu\in \La$, the map $g\mapsto \varphi(g,\mu)$ is surjective. Then
\begin{enumerate}
  \item $\mj(\lag)=\mj(\La)\times \{G\}$, where $\emptyset \times G:=\emptyset$.
  \item A finite subset $F\subseteq \mj(\La)$ is a foundation set if and only if $F'=F\times \{G\}$ is a foundation set in $\mj(\lag)$.
\end{enumerate}
\end{lem}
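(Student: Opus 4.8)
The plan is to analyze the two claims in Lemma \ref{lem3.4} separately, relying on the explicit description of $\mj(\La)$ recalled in the preceding remark, namely $\mj(\La)=\{\bigcup_{i=1}^l \mu_i\La : l\geq 1,\ \mu_i\in\La,\ d(\mu_1)=\cdots=d(\mu_l)\}$, together with the multiplication rule of the Zappa-Sz\'ep product $\lag$. For part (1), I would first compute, for a generator $(\mu,g)\in\lag$, the left and right "quotient" operations on subsets of $\lag$. The key computation is that $(\mu,g)^{-1}\big(X\times G\big)$ and $(\mu,g)\big(X\times G\big)$ both have the form $(\text{something in }\mj(\La))\times G$; here the surjectivity hypothesis on $g\mapsto\varphi(g,\mu)$ is precisely what guarantees that the $G$-component of $(\mu,g)(\nu,h)=(\mu(g.\nu),\varphi(g,\nu).h)$ sweeps out all of $G$ as $h$ ranges over $G$, so no proper subset of $G$ can appear. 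Iterating over the generators of a constructible right ideal $s_1^{-1}r_1\cdots s_l^{-1}r_l(\lag)$ and using that $\emptyset\times G=\emptyset$ by convention, I obtain $\mj(\lag)\subseteq \mj(\La)\times\{G\}$. The reverse inclusion is easier: given $\bigcup_i\mu_i\La\in\mj(\La)$, one writes it as a constructible right ideal in $\La$ (which is possible since $\La$ is itself a cancellative semigroup and $\mu_i\La\in\mj(\La)$), then lifts each $\mu_i$ to $(\mu_i,e_G)$ and checks that the same combination of $t$'s and $t^*$'s produces $(\bigcup_i\mu_i\La)\times G$, again using $v$ = identity and $d(\mu_i)$ all equal.

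For part (2), I would use the characterization that $F\subseteq\mj(\La)$ is a foundation set iff every $Y\in\mj(\La)$ meets some $X\in F$. The forward direction: if $F$ is a foundation set in $\mj(\La)$ and $F'=F\times\{G\}$, take any $Y'\in\mj(\lag)$; by part (1), $Y'=Y\times G$ for some $Y\in\mj(\La)$, pick $X\in F$ with $X\cap Y\neq\emptyset$, and then $(X\times G)\cap(Y\times G)=(X\cap Y)\times G\neq\emptyset$, so $F'$ is a foundation set (finiteness of $F'$ is clear). The converse is the symmetric argument: if $F'=F\times\{G\}$ is a foundation set and $Y\in\mj(\La)$, apply the hypothesis to $Y\times G\in\mj(\lag)$ to find $X\times G\in F'$ with $(X\cap Y)\times G\neq\emptyset$, whence $X\cap Y\neq\emptyset$; since $\emptyset\times G=\emptyset$, the empty-set bookkeeping causes no trouble.

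I expect the main obstacle to be part (1), specifically showing that constructible right ideals of $\lag$ cannot have a "twisted" or proper-subset $G$-component. This requires being careful about how the operations $(\mu,g)^{-1}(-)$ and $(\mu,g)(-)$ interact: one must verify that for any subset $A\times G$ with $A\subseteq\La$, the preimage $(\mu,g)^{-1}(A\times G)$ equals $\big((g.(-))^{-1}(\mu^{-1}A)\big)\times G$ — i.e.\ that the second coordinate is unconstrained precisely because $\varphi(g,\nu)$ is surjective in $h\mapsto\varphi(g,\nu).h$... wait, more carefully: one needs that membership $(\mu,g)(\nu,h)\in A\times G$ depends only on $\nu$ (through $\mu(g.\nu)\in A$) and not on $h$, which is immediate, and conversely that for each such $\nu$ \emph{every} $h$ is attained — this is where surjectivity of $g\mapsto\varphi(g,\mu)$ (hence of $h\mapsto\varphi(g,\nu).h$ for the relevant $\nu$) enters. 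Once this closure property of the family $\{A\times G: A\subseteq\La\}$ under both operations is established, an easy induction on the length $l$ of the defining word for a constructible right ideal finishes part (1), and part (2) follows formally as above.
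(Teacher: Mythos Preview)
For part (2) your argument is exactly the paper's: use part (1) to write every $Y'\in\mj(\lag)$ as $Y\times G$ and intersect coordinatewise; the paper spells out one direction and declares the other analogous. For part (1) the paper gives no proof at all, simply citing \cite[Lemma~2.13]{li18}, so your sketch supplies more than the paper does. The strategy you outline --- show that $\{A\times G:A\in\mj(\La)\}$ contains $\lag$ and is closed under $(\mu,g)(-)$ and $(\mu,g)^{-1}(-)$, then lift words in $\La$ to words in $(\,\cdot\,,e_G)$ for the reverse inclusion --- is sound; the relevant computations are $(\mu,g)(A\times G)=\mu(g.A)\times G$ and $(\mu,g)^{-1}(A\times G)=g^{-1}.(\mu^{-1}A)\times G$, after which one checks from the explicit description of $\mj(\La)$ in the preceding Remark that $\mj(\La)$ is closed under $\mu(-)$, $\mu^{-1}(-)$, and $g.(-)$.

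One correction: your identification of where the surjectivity hypothesis enters is wrong. You claim it is needed so that ``the $G$-component of $(\mu,g)(\nu,h)$ sweeps out all of $G$ as $h$ ranges over $G$,'' and later so that $h\mapsto\varphi(g,\nu)h$ is onto. But that map is left translation by the fixed group element $\varphi(g,\nu)$ and is therefore \emph{always} a bijection of $G$ --- no hypothesis required. Likewise, membership of $(\nu,h)$ in $(\mu,g)^{-1}(A\times G)$ depends only on whether $\mu(g.\nu)\in A$, so the $G$-coordinate is automatically free. Your closure argument thus never actually invokes the surjectivity of $g\mapsto\varphi(g,\mu)$; the step you flag as critical is in fact trivial group theory.
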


\begin{proof}
Statement (1) is just \cite[Lemma 2.13]{li19-boundary}. For (2), suppose that $F\subseteq \mj(\La)$ is a foundation set, and let $Y\times G \in \mj(\lag)$. Then there exists $X\in F$ such that $X\cap Y\neq \emptyset$. Thus $(X\times G)\cap (Y\times G)\neq \emptyset$, from which we conclude that $F\times \{G\}$ is a foundation set in $\mj(\lag)$. The converse may be shown analogously.
\end{proof}

In the following, for $\mu\in\La$ and $E\subseteq \La$ we define
$$\mathrm{Ext}(\mu;E):=\{\alpha: (\alpha,\beta)\in \La^{\mathrm{min}}(\mu,\nu) \mathrm{~ for ~ some ~} \nu\in E\}.$$

\begin{lem}\label{lem3.5}
Let $(G,\La)$ be a self-similar $k$-graph with $\La^0=\{v\}$. For every $X=\cup_{i=1}^l\mu_i \La$ and $Y=\cup_{j=1}^{l'} \nu_j \La$ in $\mj(\La)$, we have
$$X\cap Y=\cup\{\mu_i\alpha\La: 1\leq i\leq l, \alpha\in \mathrm{Ext}(\mu_i;\{\nu_i,\ldots, \nu_{l'}\})\}.$$
\end{lem}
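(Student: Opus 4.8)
The plan is to prove the formula for $X \cap Y$ in $\mathcal{J}(\Lambda)$ by a direct double inclusion, using the unique factorization property of the $k$-graph and the characterization of $\mathcal{J}(\Lambda)$ from the preceding remark. Recall that for a single-vertex $k$-graph every pair of paths is composable, so $\mu_i\Lambda$ is simply the set of all paths of the form $\mu_i\gamma$, and $\mathrm{Ext}(\mu_i; E)$ records the "extensions" $\alpha$ of $\mu_i$ for which $\mu_i\alpha$ is also a common extension of some $\nu \in E$ through $\Lambda^{\min}$.

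First I would prove the inclusion $\supseteq$. Take $\alpha \in \mathrm{Ext}(\mu_i; \{\nu_1,\dots,\nu_{l'}\})$, so there is $j$ and $(\alpha,\beta) \in \Lambda^{\min}(\mu_i,\nu_j)$, meaning $\mu_i\alpha = \nu_j\beta$. Then any path $\mu_i\alpha\gamma = \nu_j\beta\gamma$ lies in both $\mu_i\Lambda \subseteq X$ and $\nu_j\Lambda \subseteq Y$, so $\mu_i\alpha\Lambda \subseteq X \cap Y$. For the reverse inclusion $\subseteq$, take $\lambda \in X \cap Y$, so $\lambda = \mu_i\rho = \nu_j\sigma$ for some $i,j$ and $\rho,\sigma \in \Lambda$. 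Set $n := d(\mu_i) \vee d(\nu_j)$. Using unique factorization on $\lambda$ (whose degree dominates both $d(\mu_i)$ and $d(\nu_j)$, since $d(\lambda) = d(\mu_i)+d(\rho) \geq d(\mu_i)$ and similarly $\geq d(\nu_j)$, hence $d(\lambda) \geq n$), factor $\lambda = \lambda(0,n)\lambda(n,d(\lambda))$. Writing $\alpha := \mu_i(d(\mu_i), n)$... more precisely, factor $\rho = \rho(0, n - d(\mu_i))\,\rho'$ and $\sigma = \sigma(0, n-d(\nu_j))\,\sigma'$; then with $\alpha := \rho(0,n-d(\mu_i))$ and $\beta := \sigma(0,n-d(\nu_j))$ one has $\mu_i\alpha = \nu_j\beta = \lambda(0,n)$ and $d(\mu_i\alpha) = n = d(\mu_i)\vee d(\nu_j)$, so $(\alpha,\beta) \in \Lambda^{\min}(\mu_i,\nu_j)$, giving $\alpha \in \mathrm{Ext}(\mu_i;\{\nu_1,\dots,\nu_{l'}\})$ and $\lambda = \mu_i\alpha\,\rho' \in \mu_i\alpha\Lambda$. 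This establishes equality as sets; that the right-hand side again lies in $\mathcal{J}(\Lambda)$ (a finite union of $\mu\Lambda$'s, after noting all the $\mu_i\alpha$ appearing have a common degree within each batch coming from a fixed $\nu_j$) is then automatic from the displayed description of $\mathcal{J}(\Lambda)$, though strictly the statement as written only asserts the set-theoretic identity.

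The main obstacle, such as it is, is bookkeeping: one must be careful that the uniqueness in the factorization $\lambda = \lambda(0,n)\lambda(n,d(\lambda))$ is what forces the decompositions of $\rho$ and $\sigma$ to be compatible, i.e. that $\mu_i\alpha$ and $\nu_j\beta$ really are the \emph{same} path $\lambda(0,n)$ and not merely paths of the same degree. This is exactly the content of the $k$-graph unique factorization property applied to $\lambda$ with the splitting $d(\lambda) = n + (d(\lambda)-n)$, so no genuine difficulty arises. There is a minor typo in the statement to flag or silently correct: the index set on the right should be $\mathrm{Ext}(\mu_i; \{\nu_1,\dots,\nu_{l'}\})$ (all $\nu_j$), not $\{\nu_i,\dots,\nu_{l'}\}$; the $i$ in the subscript there is not meaningful since the two unions are indexed independently.
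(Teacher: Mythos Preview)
Your proposal is correct and takes essentially the same approach as the paper: both prove the nontrivial inclusion $\subseteq$ by writing $\lambda=\mu_i\rho=\nu_j\sigma$, setting $n=d(\mu_i)\vee d(\nu_j)$, and using unique factorization to extract initial segments $\alpha=\rho(0,n-d(\mu_i))$ and $\beta=\sigma(0,n-d(\nu_j))$ (the paper calls these $\alpha',\beta'$) so that $(\alpha,\beta)\in\Lambda^{\min}(\mu_i,\nu_j)$; the paper declares the reverse containment trivial, which is exactly your $\supseteq$ argument. Your observation about the typo $\{\nu_i,\ldots,\nu_{l'}\}$ versus $\{\nu_1,\ldots,\nu_{l'}\}$ is also correct.
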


\begin{proof}
For any $\la\in X\cap Y$, there are $\alpha,\beta\in \La$, $1\leq i\leq l$, and $1\leq j \leq l'$ such that $\la=\mu_i\alpha=\nu_j\beta$. Define
$$\alpha':=\alpha (0,d(\mu_i)\vee d(\nu_j)-d(\mu_i)) ~~~ \mathrm{and} ~~~ \beta':=\beta (0,d(\mu_i)\vee d(\nu_j)-d(\nu_j)).$$
Then the factorization property implies that $\la=\mu_i\alpha'\la'=\nu_j\beta'\la'$ where $d(\mu_i\alpha')=d(\nu_j \beta')=d(\mu_i)\vee d(\nu_j)$ and $\la'=\la(d(\mu_i)\vee d(\nu_j),d(\la))$. It follows that $\la\in \mu_i\alpha'\La$ with $\alpha'\in \mathrm{Ext}(\mu_i;\{\nu_j\})$ as desired. The reverse containment is trivial.
\end{proof}

The following result is inspired by \cite[Theorem 3.3]{li19-boundary}.

\begin{thm}\label{thm3.6}
Let $(G,\La)$ be a self-similar $k$-graph with $\La^0=\{v\}$ and let $\{s_\mu,u_g\}$ be the $(G,\La)$-family generating $\ep$ as in Proposition \ref{prop2.9}. Suppose that for every $\mu\in \La$ the map $g\mapsto \varphi(g,\mu)$ is surjective. If the family $\{t_{(\mu,g)}, q_X: (\mu,g)\in \lag, X\in \mj(\lag)\}$ generates $\q$, then there exists an $R$-algebra $*$-isomorphism $\pi:\ep \rightarrow \q$ such that $\pi(s_\mu)=t_{(\mu,e_G)}$ and $\pi(u_g)=q_{(v,g)}$ for all $\mu\in \La$ and $g\in G$.
\end{thm}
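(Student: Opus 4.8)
The plan is to construct mutually inverse $*$-homomorphisms between $\ep$ and $\q$ using the respective universal properties. For the forward map $\pi\colon\ep\to\q$, I would first verify that the elements $t_{(\mu,e_G)}$ and $q_{(v,g)}$ (where I identify $g\in G$ with the principal ideal $(v,g)\in\mj(\lag)$ via Lemma~\ref{lem3.4}(1), so $q_{(v,g)}$ really means $q_{(v\La)\bowtie G\text{-style translate}}$—more precisely I would use the isometry $t_{(v,g)}$, whose range projection behaves like a unitary since $\La^0=\{v\}$) form an Exel-Pardo $(G,\La)$-family in $\q$ in the sense of Proposition~\ref{prop2.9}. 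Checking (1) that $\{t_{(\mu,e_G)}\}$ is a Kumjian-Pask $\La$-family uses relation (1) of Definition~\ref{defn3.1} for (CK2), the isometry property for (CK3), and relation (5) (the foundation-set relation) applied to the foundation set $\{\mu\La:\mu\in v\La^n\}$ together with relations (2) and (4) for (CK4); checking (2) that $g\mapsto t_{(v,g)}$ is a unitary representation uses relation (1) and the fact that in the single-vertex case $(v,g)(v,g^{-1})=(v,v)$ so these isometries are two-sided invertible; checking (3), the self-similar relation $t_{(v,g)}t_{(\mu,e_G)}=t_{(g.\mu,e_G)}t_{(v,\varphi(g,\mu))}$, is exactly the Zappa-Sz\'ep multiplication rule $(v,g)(\mu,e_G)=(g.\mu,\varphi(g,\mu))=(g.\mu,e_G)(v,\varphi(g,\mu))$ fed through relation (1). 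By universality of $\ep$ this yields a $*$-homomorphism $\pi$ with the stated values on generators.

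For the reverse map $\rho\colon\q\to\ep$, I would produce the target elements inside (a multiplier completion of) $\ep$: set $\rho(t_{(\mu,g)}):=s_\mu u_g$ and, for a constructible ideal $X=\bigcup_{i=1}^l\mu_i\La$ (all $d(\mu_i)$ equal, using the Remark after Definition~3.3 and Lemma~\ref{lem3.4}(1)), set $\rho(q_{X\bowtie G}):=\sum_{i=1}^l s_{\mu_i}s_{\mu_i}^*$—well-definedness of the latter across different representations of the same ideal follows from Lemma~\ref{lem3.5}, which expresses intersections via $\mathrm{Ext}$, combined with (CK2) and (CK4). Then I would check relations (1)--(5) of Definition~\ref{defn3.1}: (1) is the Zappa-Sz\'ep rule pushed through the $(G,\La)$-family relations as in the proof of Proposition~\ref{prop2.7}; (2) is a computation $s_\mu u_g(\sum s_{\nu_j}s_{\nu_j}^*)u_g^*s_\mu^*=\sum s_{\mu(g.\nu_j)}s_{\mu(g.\nu_j)}^*$ using relation (3) of Proposition~\ref{prop2.9}; (3) is immediate ($q_S\mapsto s_v$, which is the unit in the single-vertex case, and $q_\emptyset\mapsto 0$); (4) reduces to Lemma~\ref{lem3.5} again; and (5), the foundation-set relation $\prod_{X\in F}(1-q_X)=0$, follows from (CK4) applied to a common degree $n=\bigvee_{\mu\in\bigcup F}d(\mu)$: every vertex projection $s_v$ decomposes over $v\La^n$, and a foundation set in $\mj(\La)$ "covers" $v\La^n$ in the sense that every $\eta\in v\La^n$ extends some $\mu$ appearing in some $X\in F$, so $\prod_{X\in F}(s_v-\rho(q_X))$ annihilates each $s_\eta s_\eta^*$. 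By universality of $\q$ this gives a $*$-homomorphism $\rho$.

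Finally I would check $\rho\circ\pi=\mathrm{id}$ and $\pi\circ\rho=\mathrm{id}$ on generators: $\rho(\pi(s_\mu))=\rho(t_{(\mu,e_G)})=s_\mu u_{e_G}=s_\mu$, $\rho(\pi(u_g))=\rho(t_{(v,g)})=s_v u_g=u_g$, and conversely $\pi(\rho(t_{(\mu,g)}))=\pi(s_\mu u_g)=t_{(\mu,e_G)}t_{(v,g)}=t_{(\mu,g)}$ and $\pi(\rho(q_X))=\sum t_{(\mu_i,e_G)}t_{(\mu_i,e_G)}^*=q_X$ (the last equality is relation (2) of Definition~\ref{defn3.1} read backwards, or a direct consequence of how $q_{\mu_i\La}$ relates to the range of $t_{(\mu_i,e_G)}$). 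Since these generators generate, the two composites are the identity, so $\pi$ is a $*$-isomorphism.

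The step I expect to be the main obstacle is establishing the foundation-set relation (5) under $\rho$ and, symmetrically, the Cuntz-Krieger relation (CK4) under $\pi$: both require translating between the combinatorics of foundation sets in $\mj(\La)$ (finite subsets of $\mj(\La)$ meeting every constructible ideal) and the single fixed-degree covering condition $\sum_{\mu\in v\La^n}s_\mu s_\mu^*=s_v$ of the Kumjian-Pask family. The subtlety is that a foundation set need not consist of ideals of a common degree, so one must pass to a common refinement degree $n$ and argue, via the unique factorization property and row-finiteness, that the refined collection $\{\mu\La:\mu\in v\La^n\}$ is simultaneously refined by $F$; handling the ideal intersections carefully via Lemma~\ref{lem3.5} is where the real work lies, and I would also need the surjectivity hypothesis on $g\mapsto\varphi(g,\mu)$ precisely to invoke Lemma~\ref{lem3.4} and ensure $\mj(\lag)=\mj(\La)\times\{G\}$, so that projections in $\q$ indexed by constructible ideals all have the product form $X\bowtie G$ and match the $G$-independent projections $\sum s_{\mu_i}s_{\mu_i}^*$ in $\ep$.
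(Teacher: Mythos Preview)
Your proposal is correct and follows essentially the same route as the paper: construct $\pi$ by exhibiting a $(G,\Lambda)$-family in $\q$ (with (CK4) coming from the foundation-set relation applied to $\{\mu\Lambda:\mu\in\Lambda^n\}$), construct $\rho$ by exhibiting a boundary-quotient family in $\ep$ (with relation~(5) coming from the covering claim that $\{\mu_{ij}\alpha:\alpha\in\Lambda^{n-d(\mu_{ij})}\}=\Lambda^n$ for $n=\bigvee d(\mu_{ij})$, exactly as you outline), and use Lemma~\ref{lem3.4} (hence the surjectivity hypothesis on $\varphi(\cdot,\mu)$) and Lemma~\ref{lem3.5} throughout. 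The only cosmetic difference is that the paper verifies surjectivity of $\pi$ directly and then checks just $\rho\circ\pi=\mathrm{id}_{\ep}$, whereas you check both composites; and you correctly resolved the typo in the theorem statement by taking $\pi(u_g)=t_{(v,g)}$ rather than $q_{(v,g)}$.
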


\begin{proof}
For every $\mu\in \La$ and $g\in G$, define
$$S_\mu:=t_{(\mu,e_G)} ~~ \mathrm{and} ~~ U_g:=t_{(v,g)}.$$
We will show that $\{S, U\}$ is a $(G,\La)$-family in $\q$, which is described in Proposition \ref{prop2.9}. First, for each $g\in G$ we have
$$U_g U_g^*=t_{(v,g)} ~ q_{\lag} ~ t_{(v,g)}^* = q_{(v,g)\lag}= q_{\lag}=1_{\q}.$$
So, $g\mapsto U_g$ is a unitary $*$-representation of $G$ into $\q$. Moreover, (KP1)-(KP3) can be easily checked, so we verify (KP4) for $\{s_\mu:\mu\in \La\}$. Fix some $n\in \N^k$. Then $\{\mu \La:\mu\in \La^n\}$ is a foundation set in $\mj(\La)$, and thus so is $F=\{\mu \La \times G:\mu\in\La^n\}$ in $\mj(\lag)$ by Lemma \ref{lem3.4}. Hence we have
\begin{align*}
1-\sum_{\mu\in\La^n} S_\mu S_\mu^*&=\prod_{\mu\in\La^n}(1-S_\mu S_\mu^*)  \hspace{16mm} (\mathrm{because ~} S_\mu S_\mu^*\mathrm{s ~ are ~ pairwise ~ orthogonal})\\
&=\prod_{\mu\in\La^n}(1-t_{(\mu,e_G)} ~ q_{\lag} ~ t_{(\mu,e_G)}^*)\\
&=\prod_{\mu\in\La^n}(1-q_{\mu\La \times G}) \hspace{10mm} (\mathrm{by ~ eq.~  (2) ~ of ~ Definition ~ \ref{defn3.1}}) \\
&=\prod_{X\in F} (1-q_X)=0 \hspace{10mm} (\mathrm{by ~ eq.~  (5) ~ of ~ Definition ~ \ref{defn3.1}}).
\end{align*}
Because $S_v=U_{e_G}=1$, (KP4) is verified, and therefore $\{S_\mu:\mu\in \La\}$ is a Kumjian-Pask $\La$-family. Since for each $\mu\in\La$ and $g\in G$,
$$U_g S_\mu=t_{(v,g)}t_{(\mu,e_G)} = t_{(v,g)(\mu,e_G)}=t_{(g\cdot \mu,\varphi(g,\mu))}= S_{g\cdot \mu} U_{\varphi(g,\mu)},$$
and so we have shown that $\{S,U\}$ is a $(G,\La)$-family in $\q$. Now the universality implies that the desired $*$-homomorphism $\pi:\ep \rightarrow \q$ exists.

Now we prove that $\pi$ is an isomorphism. In order to do this, it suffices to find a homomorphism $\rho :\q\rightarrow \ep$ such that $\rho \circ \pi=\mathrm{id}_{\ep}$ and $\pi \circ \rho=\mathrm{id}_{\q}$. For any $(\mu,g)\in \lag$ and $X=(\cup_{i=1}^l \mu_i\La) \times G\in \mj(\lag)$, we define
$$T_{(\mu,g)}:=s_\mu u_g  \hspace{5mm} \mathrm{and} \hspace{5mm} Q_X:=\sum_{i=1}^l s_{\mu_i}s_{\mu_i}^*.$$
We will show that the family $\{T,Q\}$ satisfies the properties of Definitions \ref{defn3.1}. Relations (1)-(3) easily hold by the $(G,\La)$-relations for $\{s,u\}$. Also, for every $X=(\cup_{i=1}^l \mu_i\La) \times G$ and $Y=(\cup_{j=1}^{l'}\nu_j\La) \times G$ in $\mj(\lag)$, we have
\begin{align*}
Q_X Q_Y&=\big(\sum_{i=1}^l s_{\mu_i}s_{\mu_i}^* \big)\big(\sum_{j=1}^{l'} s_{\nu_j}s_{\nu_j}^* \big)\\
&=\sum_{i,j} s_{\mu_i}(s_{\mu_i}^* s_{\nu_j})s_{\nu_j}^*\\
&=\sum_{i,j} s_{\mu_i}\big(\sum_{(\alpha,\beta)\in \La^{\mathrm{min}}(\mu_i,\nu_j)}s_\alpha s_\beta^* \big) s_{\nu_j}^* \hspace{5mm} (\mathrm{by ~ [1, Lemma~ 3.3]})\\
&=\sum_{i,j} \sum_{\substack{\mu_i\alpha=\nu_j \beta\\ d(\mu_i\alpha)=d(\mu_i)\vee d(\nu_j)}} s_{\mu_i\alpha} s_{\nu_j \beta}^*\\
&=Q_{X\cap Y} \hspace{15mm} (\mathrm{by ~ Lemma} ~ \ref{lem3.5}).
\end{align*}

For eq. (5) of Definition \ref{defn3.1}, let $F=\{X_i\times G:=\cup_{j=1}^{t_i}\mu_{ij}\La \times G: 1\leq i\leq l\}$ be a foundation set in $\mj(\lag)$. Then $F'=\{X_i=\cup_{j=1}^{t_i}\mu_{ij}\La\}_{i=1}^l$ is a foundation set in $\mj(\La)$ by Lemma \ref{lem3.4}(2). Defining $n:=\bigvee_{i,j}d(\mu_{ij})$, we claim that the set $M=\{\mu_{ij}\alpha: \alpha\in \La^{n-d(\mu_{ij})}, 1\leq i\leq l, 1\leq j\leq t_i\}$ coincides with $\La^n$. Indeed, if on the contrary there exists some $\la\in \La^n \setminus M$, then $\La^{\mathrm{min}}(\la,\mu_{ij})=\emptyset$, and hence $\la\La \cap \mu_{ij}\La=\emptyset$ for all $i$ and $j$. This yields that $\la\La\cap X_i=\emptyset$ for every $X_i\in F'$, contradicting that $F'$ is a foundation set in $\mj(\La)$.

Now one may compute
\begin{align*}
\prod_{X_i\times G\in F} (1-Q_{X_i\times G})&= \prod_{i=1}^l (1-\sum_{j=1}^{t_i} s_{\mu_{ij}} s_{\mu_{ij}}^*)\\
&= \prod_{i=1}^l \big(1- \sum_{j=1}^{t_i} s_{\mu_{ij}} (\sum_{\alpha\in \La^{n-d(\mu_{ij})}} s_\alpha s_\alpha^*) s_{\mu_{ij}}^*  \big)\\
&=\prod_{i=1}^l \big(1-\sum_{j=1}^{t_i} \sum_{\alpha\in \La^{n-d(\mu_{ij})}} s_{\mu_{ij}\alpha} s_{\mu_{ij}\alpha}^*  \big) \hspace{10mm} (\star).
\end{align*}
Observe that the projections $s_{\mu_{ij}\alpha}s_{\mu_{ij}\alpha}^*$ are pairwise orthogonal because $d(\mu_{ij}\alpha)=n$ for all $i,j$ (see \cite[Remark 3.2(c)]{pin13}). Hence, using the above claim, expression $(\star)$ equals to
$$(\star)=1-\sum_{\la\in\La^n}s_\la s_\la^*=0 \hspace{10mm} (\mathrm{by ~ (KP4)}).$$
Therefore, the family $\{T,Q\}$ satisfies the relations of Definition \ref{defn3.1}, and by the universality there exists an algebra $*$-homomorphism $\rho:\q\rightarrow \ep$ such that $\rho(t_{(\mu,g)})=T_{(\mu,g)}$ and $\rho(q_X)=Q_X$ for $(\mu,g)\in \lag$ and $X\in \mj(\lag)$. It is clear that $\rho\circ \pi=\mathrm{id}_{\ep}$ and $\pi\circ \rho=\mathrm{id}_{\q}$ because they fix the generators of $\ep$ and $\q$, respectively. Consequently, $\pi$ is an isomorphism, completing the proof.
\end{proof}


\section{A graded Uniqueness Theorem}

In this section, we prove a graded uniqueness theorem for $\ep$ which generalizes and modifies \cite[Theorem A]{haz19} for self-similar $k$-graphs. This modification, in particular, helps us to prove Theorems \ref{steib-algeb-thm} and \ref{thm6.7}.

Let us first recall some definitions. Let $\Gamma$ be a group and $A$ be an algebra over a ring $R$. $A$ is called \emph{$\Gamma$-graded} (or briefly, \emph{graded} whenever the group is clear) if there is a family of $R$-submodules $\{A_\gamma:\gamma\in \Gamma\}$ of $A$ such that $A=\bigoplus_{\gamma\in \Gamma} A_\gamma$ and $A_\gamma A_{\gamma'}\subseteq A_{\gamma\gamma'}$ for all $\gamma,\gamma'\in \Gamma$. Then each set $A_\gamma$ is called a \emph{$\gamma$-homogeneous component of $A$}. In this case, we say an ideal $I$ of $A$ is \emph{$\Gamma$-graded} if $I=\bigoplus_{\gamma\in \Gamma}(I\cap A_\gamma)$. Note that an ideal $I$ of $A$ is $\Gamma$-graded if and only if it is generated by a subset of $\bigcup_{\gamma\in \Gamma} A_\gamma$, the homogeneous elements of $A$.

Furthermore, if $A$ and $B$ are two $\Gamma$-graded algebras over $R$, a homomorphism $\phi:A\rightarrow B$ is said to be a \emph{graded homomorphism} if $\phi(A_\gamma)\subseteq B_\gamma$ for all $\gamma\in \Gamma$. Hence the kernel of a graded homomorphism is always a graded ideal. Also, if $I$ is a graded ideal of $A$, then there is a natural $\Gamma$-grading $(A_\gamma+I)_{\gamma\in \Gamma}$ on the quotient algebra $A/I$, and thus the quotient map $A\rightarrow A/I$ is a graded homomorphism.

\begin{lem}
Let $(G,\La)$ be a self-similar $k$-graph. If for every $n\in \Z^k$, we define
$$\ep_n:=\mathrm{span}_R\big\{s_\mu u_{s(\mu),g} s_\nu^*: g\in G,~ \mu,\nu\in \La,~ \mathrm{and}~ d(\mu)-d(\nu)=n\big\},$$
then $(\ep_n)_{n\in \Z^k}$ is a $\Z^k$-grading on $\ep$.
\end{lem}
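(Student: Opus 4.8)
The plan is to verify the three requirements of a $\Z^k$-grading directly from Proposition~\ref{prop2.7}: that $\ep = \sum_{n\in\Z^k} \ep_n$, that the sum is direct, and that $\ep_n \ep_m \subseteq \ep_{n+m}$. The spanning property is immediate from \eqref{eq2.1}, since every spanning element $s_\mu u_{s(\mu),g} s_\nu^*$ lies in $\ep_{d(\mu)-d(\nu)}$; grouping the spanning set of \eqref{eq2.1} according to the value of $d(\mu)-d(\nu)$ gives $\ep = \sum_n \ep_n$. For multiplicativity, I would reuse the computation already performed in the proof of Proposition~\ref{prop2.7}: a product $\bigl(s_\mu u_{s(\mu),g} s_\nu^*\bigr)\bigl(s_\alpha u_{s(\alpha),h} s_\beta^*\bigr)$ is either zero, or (in the case $\alpha = \nu\alpha'$) equal to a scalar multiple of $s_{\mu(g.\alpha')} u_{\ast,\ast} s_\beta^*$, and since $d\bigl(\mu(g.\alpha')\bigr) = d(\mu)+d(\alpha') = d(\mu) + d(\alpha) - d(\nu)$ (using that automorphisms preserve degree and the factorization $\alpha=\nu\alpha'$), the degree of the resulting term is $\bigl(d(\mu)+d(\alpha)-d(\nu)\bigr) - d(\beta) = \bigl(d(\mu)-d(\nu)\bigr) + \bigl(d(\alpha)-d(\beta)\bigr)$; the symmetric case $\nu = \alpha\nu'$ gives the same degree count, so $\ep_n\ep_m\subseteq\ep_{n+m}$.

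The one genuinely nontrivial point — and the step I expect to be the main obstacle — is the directness of the sum, i.e. that the homogeneous components $\ep_n$ have trivial pairwise intersections, or equivalently $\ep_n \cap \sum_{m\neq n}\ep_m = 0$. This does not follow from the spanning and multiplicativity properties alone; one needs to know that there are no hidden linear relations mixing elements of different degrees. The clean way to handle this is to exhibit the grading at the level of the universal construction of Subsection~\ref{ss2.2}: on the free $*$-algebra $\f$ assign to each generator a degree in $\Z^k$ by $\deg(S_\mu) = d(\mu)$, $\deg(S_\mu^*) = -d(\mu)$, $\deg(U_{v,g}) = 0$, $\deg(U_{v,g}^*) = 0$, extended multiplicatively to words; this makes $\f$ a $\Z^k$-graded $*$-algebra. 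Then one checks that each of the defining relations (1)--(5) of Definition~\ref{defn2.4} is homogeneous with respect to this degree — for instance relation (4), $U_{v,g}S_\mu = \delta_{v,g.r(\mu)} S_{g.\mu} U_{g.s(\mu),\varphi(g,\mu)}$, is homogeneous of degree $d(\mu) = d(g.\mu)$ because $\psi_g$ preserves degree, and the remaining relations have all terms of degree $0$ or are manifestly balanced. Hence the ideal $I$ generated by the routs of these relations is a graded ideal, so the quotient $\ep = \f/I$ inherits a $\Z^k$-grading, and its $n$-component is the image of the degree-$n$ part of $\f$, which is precisely $\ep_n$ as defined in the statement (again invoking Proposition~\ref{prop2.7} to see the image is spanned by the stated elements).

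So the write-up would proceed: (i) introduce the grading on $\f$ and observe it is a $\Z^k$-graded $*$-algebra; (ii) check each defining relation is homogeneous, hence $I$ is graded and $\ep$ inherits a grading $\ep = \bigoplus_n (\f_n + I)/I$; (iii) identify $(\f_n+I)/I$ with the $\ep_n$ of the statement using the description \eqref{eq2.1} from Proposition~\ref{prop2.7}. Steps (i) and (iii) are routine bookkeeping; step (ii) is where all the content sits, and it is short because the degree assignment was chosen precisely so that the self-similar relations become homogeneous. Alternatively, if one prefers to avoid revisiting the universal construction, one can argue directly that the map $s_\mu u_{s(\mu),g} s_\nu^* \mapsto t$ times itself placed in degree $d(\mu)-d(\nu)$ is well defined by appealing to the universal property to build, for each $n\in\Z^k$, a projection $\ep \to \ep$ onto the span of degree-$n$ elements — but the graded-ideal argument is cleaner and I would present that one.
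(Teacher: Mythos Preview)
Your proposal is correct and follows essentially the same approach as the paper: grade the free $*$-algebra $\f$ by declaring $\deg(S_\mu)=d(\mu)$, $\deg(S_\mu^*)=-d(\mu)$, $\deg(U_{v,g})=\deg(U_{v,g}^*)=0$, observe that the defining relations are homogeneous so $I$ is a graded ideal, and then invoke Proposition~\ref{prop2.7} to identify the homogeneous components of the quotient with the stated $\ep_n$. Your preliminary direct verification of spanning and multiplicativity is accurate but redundant, since the graded-ideal argument already delivers all three properties of a grading at once; the paper simply goes straight to that argument.
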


\begin{proof}
Consider the free $*$-algebra $\mathbb{F}_R(X)$  and its ideal $I$ as in Subsection \ref{ss2.2}. If we define
$$\theta(S_\mu):=d(\mu),~ \theta(S_\mu^*):=-d(\mu), ~\mathrm{and} ~ \theta(U_{v,g}):=0$$
for all $g\in G$, $v\in \La^0$ and $\mu\in \La$, then $\theta$ induces a $\Z^k$-grading on $\mathbb{F}_R(X)$. Also, since the generators of $I$ are all homogenous, $I$ is a graded ideal. Therefore, $\ep\cong \mathbb{F}_R(X)/I$ is a $\Z^k$-graded algebra, and Proposition \ref{prop2.7} concludes the result.
\end{proof}

\begin{thm}[Graded Uniqueness]\label{grad-uniq-thm}
Let $(G,\La)$ be a self-similar $k$-graph. Let $\phi:\ep\rightarrow B$ be a $\Z^k$-graded $R$-algebra $*$-homomorphism into a $\Z^k$-graded $*$-algebra $B$. If $\phi(a)\neq 0$ for every nonzero element of the form $a=\sum_{i=1}^l r_i u_{v,g_i}$ with $v\in\La^0$ and $g_i^{-1} \cdot v =g_j^{-1} \cdot v $ for $1\leq i,j\leq l$, then $\phi$ is injective.
\end{thm}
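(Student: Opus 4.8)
The plan is to follow the standard template for graded uniqueness theorems (as in \cite{pin13,haz19}): reduce the injectivity of $\phi$ on all of $\ep$ to its injectivity on the $0$-graded component $\ep_0$, and then reduce the latter to the hypothesis about elements of the form $a=\sum_i r_i u_{v,g_i}$. A graded $*$-homomorphism $\phi$ has a graded kernel, and a graded ideal is generated by its homogeneous elements; moreover, if $0 \neq b \in \ker\phi$ is homogeneous of degree $n$, then $b^*b$ is a nonzero element of $\ker\phi \cap \ep_0$ (nonzero because the generators are partial isometries, so the leading term survives — here one uses that $\ep$ has a faithful conditional expectation onto $\ep_0$, or more elementarily that $s_\mu^* s_\mu = s_{s(\mu)} \neq 0$; some care is needed, and I would if necessary invoke the Steinberg-algebra picture of Section~5, but the cleaner route is to argue directly that $b^* b \neq 0$ using relation (CK4) to write $b$ in a reduced form). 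So it suffices to show $\phi$ is injective on $\ep_0$.

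So take a nonzero $c \in \ep_0$; by Proposition~\ref{prop2.7}, $c = \sum_{\text{finite}} r_{\mu,g,\nu}\, s_\mu u_{s(\mu),g} s_\nu^*$ with $d(\mu)=d(\nu)$ throughout. Using the Kumjian-Pask relations (CK4) one can refine all the paths appearing so that they have a common degree, i.e. write $c$ as a sum over a finite set of pairs $(\mu,\nu)$ with $d(\mu)=d(\nu)=n$ fixed; after a further application of (CK4) one arranges that whenever two summands share the same source projections the ranges $\mu$ are either equal or orthogonal. Then, cutting down by $s_\mu^*(\cdot)s_\mu$ and using (CK3) together with relation (4) of Definition~\ref{defn2.4} to absorb the $s$'s into the $u$'s, one isolates, for a suitable choice of $\mu$, a nonzero "diagonal'' block of the form $a = \sum_{i} r_i\, u_{v,g_i}$ where $v = s(\mu)$ and all $g_i$ satisfy $g_i^{-1}.v = g_j^{-1}.v$ (this last constraint is exactly what the products $u_{v,g_i}^* u_{v,g_i} = u_{v,e_G}$ force, via relation (5): $u_{v,g}u_{w,h}=\delta_{v,g.w}u_{v,gh}$, so that the relevant partial isometries have comparable source/range data). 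The hypothesis of the theorem applies to give $\phi(a)\neq 0$, and tracing the cut-downs backwards (each is implemented by multiplication by $\phi$ of a partial isometry, which cannot create something from nothing once $\phi(a)\neq 0$) yields $\phi(c)\neq 0$.

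The main obstacle — and the reason the theorem is stated with this particular family of test elements rather than merely "$\phi(s_v)\neq 0$ for all $v$'' as in the classical Kumjian-Pask or graph-algebra case — is precisely the last reduction: in the presence of the group $G$, the $0$-graded corner at a vertex $v$ is not spanned by the single projection $s_v$ but by the (twisted) group-like elements $u_{v,g}$, and these need not be linearly independent nor form a copy of $R[\mathrm{Stab}(v)]$ in an obvious way. So one cannot peel off a single projection; one must carry the whole linear combination $\sum_i r_i u_{v,g_i}$ through the argument, and the combinatorial bookkeeping of which pairs $(\mu,\nu)$ contribute to a given corner, and the verification that the residual block genuinely has the form demanded in the hypothesis (in particular the condition $g_i^{-1}.v = g_j^{-1}.v$), is where the real work lies. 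I would organize this bookkeeping by first proving an auxiliary lemma describing $\ep_0$ as a direct limit, over $n \in \N^k$, of "matrix-like'' algebras $\bigoplus_{v} M_{\La^n v}\big(\,\text{span}_R\{u_{w,g}\}\,\big)$ indexed by vertices, with the connecting maps given by (CK4); injectivity on each such block then reduces cleanly to the stated hypothesis, and injectivity on the limit follows since the connecting maps and $\phi$ are compatible.
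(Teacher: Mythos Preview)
Your overall shape---normalize via (CK4), compress to isolate a sum $\sum_i r_i u_{v,g_i}$, invoke the hypothesis---is the right one and is what the paper does. But two steps in your reduction are genuine gaps.

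First, the passage from arbitrary degree to degree~$0$ via $b\mapsto b^*b$ is not justified. Over a general commutative $*$-ring $R$ there is no reason for $b\neq 0$ to force $b^*b\neq 0$ (already for $R=\Z/4\Z$ with trivial involution, $b=2s_v$ gives $b^*b=0$), and your fallback of invoking the Steinberg-algebra realization is circular: Theorem~\ref{steib-algeb-thm} both assumes pseudo-freeness (which is \emph{not} assumed here) and uses Theorem~\ref{grad-uniq-thm} in its proof. The paper sidesteps this completely by never reducing to degree~$0$: since $\ker\phi$ is graded it suffices to show that $\phi(b)=0$ with $b\in A_n$ implies $b=0$, and this is argued directly in $A_n$.

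Second, your compression $s_\mu^*(\,\cdot\,)s_\mu$ is symmetric and therefore only detects ``diagonal'' summands $s_\mu u_{s(\mu),g}s_\mu^*$; a nonzero $c\in A_0$ built from off-diagonal terms $s_\mu u\,s_\nu^*$ with $\mu\neq\nu$ would be annihilated by every such compression. The paper's fix is to compress \emph{asymmetrically}: after using (CK4) to arrange that all $\mu_i$ share a common degree $n'$ (hence all $\nu_i$ share degree $n'-n$), one computes $s_{\mu_j}^*\,b\,s_{\nu_j}=\sum_{i\in[j]} r_i\,u_{w_i,g_i}$, where $[j]=\{i:(\mu_i,\nu_i)=(\mu_j,\nu_j)\}$. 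This lies in $\ker\phi$, hence is zero by hypothesis, and then $\sum_{i\in[j]} r_i\,s_{\mu_i}u_{w_i,g_i}s_{\nu_i}^*=s_{\mu_j}\big(\sum_{i\in[j]} r_i u_{w_i,g_i}\big)s_{\nu_j}^*=0$; summing over the equivalence classes $[j]$ gives $b=0$. This single asymmetric-compression step replaces both your degree-$0$ reduction and your direct-limit lemma, and it needs no positivity, no expectation, and no hypothesis on $(G,\La)$ beyond the definition.
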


\begin{proof}
For convenience we write $A=\ep$. Since $A=\bigoplus_{n\in \Z^k}A_n$ and $\phi$ preserves the grading, it suffices to show that $\phi$ is injective on each $A_n$. So, fix some $b\in A_n$, and assume $\phi(b)=0$. By equation (\ref{eq2.1}), we can write
\begin{equation}\label{equ3.1}
b=\sum_{i=1}^lr_i s_{\mu_i}u_{w_i,g_i}s_{\nu_i}^*
\end{equation}
where $w_i=s(\mu_i)=g_i \cdot s(\nu_i)$ and $d(\mu_i)-d(\nu_i)=n$ for $1\leq i\leq l$. Define $n'=\vee_{1\leq i\leq l}d(\mu_i)$. Then, for each $i\in \{1\ldots l\}$, (KP4) says that
$$s_{w_i}=\sum_{\la\in w_i\La^{n'-d(\mu_i)}}s_\la s_\la^*,$$
and we can write
\begin{align*}
s_{\mu_i}u_{w_i,g_i}s_{\nu_i}^*&=s_{\mu_i}(s_{s(\mu_i)})u_{w_i,g_i}s_{\nu_i}^*\\
&=\sum s_{\mu_i}(s_\la s_\la^*)u_{w_i,g_i}s_{\nu_i}^*\\
&=\sum s_{\mu_i \la}\big(s_{\nu_i} u_{w_i,g_i}^*s_{\la}\big)^*\\
&=\sum s_{\mu_i \la}\big(s_{\nu_i}u_{g_i^{-1} \cdot w_i,g_i^{-1}} s_\la\big)^*\\
&=\sum s_{\mu_i \la}\big(s_{\nu_i} s_{g_i^{-1} \cdot \la}u_{g_i^{-1} \cdot s(\la),\varphi(g_i^{-1},\la)} \big)^*\\
&=\sum s_{\mu_i\la} u_{\varphi(g_i^{-1},\la)^{-1}g_i^{-1} \cdot s(\la),\varphi(g_i^{-1},\la)^{-1}} s_{\nu_i (g_i^{-1} \cdot \la)}^*
\end{align*}
where the above summations are on $\la\in w_i\La^{n'-d(\mu_i)}$. So, in each term of (\ref{equ3.1}), we may assume $d(\mu_i)=n'$ and $d(\nu_i)=n'-n$. Now, for any $1\leq j\leq l$, (KP3) yields that
$$s_{\mu_j}^*b s_{\nu_j}=s_{\mu_j}^*\big( \sum_{i=1}^lr_i s_{\mu_i}u_{w_i,g_i}s_{\nu_i}^*\big) s_{\nu_j}= \sum_{i\in [j]}r_i u_{w_i,g_i}, $$
where $[j]:=\{1\leq i\leq l:(\mu_i,\nu_i)=(\mu_j,\nu_j)\}$. Thus
$$\phi\big(\sum_{i\in [j]}r_i u_{w_i,g_i} \big)=\phi(s_{\mu_j}^*)\phi(b)\phi(s_{\nu_j})=0$$
and hypothesis forces $\sum_{i\in [j]}r_i u_{w_i,g_i}=0$. Therefore,
$$\sum_{i\in [j]}r_i s_{\mu_i}u_{w_i,g_i}s_{\nu_i}^*=s_{\mu_j}\big(\sum_{i\in [j]}r_i u_{w_i,g_i} \big) s_{\nu_j}^*=0.$$
Since the index set $\{1,\ldots,l\}$ is a disjoint union of $[j]$'s, we obtain $b=0$. It follows that $\phi$ is injective.
\end{proof}


\section{$\ep$ as a Steinberg algebra}\label{sect5}

In this section, we want to prove an Steinberg algebra model for $\ep$. Although our result will be the $k$-graph generalization of \cite[Theorem B]{haz19}, note that our proof relies on the graded uniqueness theorem, Theorem \ref{grad-uniq-thm}, and is completely deferent from that of \cite[Theorem B]{haz19}. This gives us a much easier and shorter proof.

Let us first review some terminology about groupoids; see \cite{ren80} for more details. A \emph{groupoid} is a small category $\mathcal{G}$ with inverses. For each $\alpha\in \mathcal{G}$, we may define the range $r(\alpha):=\alpha \alpha^{-1}$ and the source $s(\alpha):=\alpha^{-1}\alpha$ satisfying $r(\alpha)\alpha=\alpha=\alpha s(\alpha)$. It follows that for every $\alpha,\beta\in \mathcal{G}$, the composition $\alpha\beta$ is well-defined if and only if $s(\alpha)=r(\beta)$. The \emph{unit space of $\mathcal{G}$} is $\mathcal{G}^{(0)}:=\{\alpha^{-1}\alpha: \alpha\in \mathcal{G}\}$. Throughout the paper we work with \emph{topological groupoids}, which are ones equipped with a topology such that the maps $r$ and $s$ are continuous. Then a \emph{bisection} is a subset $B\subseteq \mathcal{G}$ such that both restrictions $r|_B$ and $s|_B$ are homeomorphisms. In case $\mathcal{G}$ has a basis of compact open bisections, $\mathcal{G}$ is called an \emph{ample groupoid}.

Let $(G,\La)$ be a self-similar $k$-graph. We also recall the groupoid $\g$ introduced in \cite{li18}. Let $C(\N^k,G)$ be the group of all maps form $\N^k$ to $G$ with the pointwise multiplication. For $f,g\in C(\N^k,G)$, define the equivalence relation $f\sim g$ in case there exists $n_0\in \N^k$ such that $f(n)=g(n)$ for all $n\geq n_0$. Write $Q(\N^k,G):=C(\N^k,G)/\sim$. Also, for each $z\in\Z^k$, let $\mathcal{T}_z: C(\N^k,G)\rightarrow C(\N^k,G)$ be the automorphism defined by
$$\mathcal{T}_z(f)(n)=\left\{
    \begin{array}{ll}
      f(n-z) & n-z\geq 0 \\
      e_G & \mathrm{otherwise}
    \end{array}
  \right. \hspace{5mm} (f\in C(\N^k,G),~ n\in \N^k).
$$
Then $\mathcal{T}_z$ induces an automorphism, denoted again by $\mathcal{T}_z$, on $Q(\N^k,G)$, which is $\mathcal{T}_z([f])=[\mathcal{T}_z(f)]$. So, $\mathcal{T}:\Z^k\rightarrow \mathrm{Aut}Q(\N^k,G)$ is a homomorphism and we consider the semidirect product group $Q(\N^k,G)\rtimes_{\mathcal{T}} \Z^k$.

Note that for every $g\in G$ and $x\in \La^\infty$, one may define $\varphi(g,x)\in C(\N^k,G)$ by
$$\varphi(g,x)(n):=\varphi(g,x(0,n)) \hspace{5mm}  (n\in \N^k).$$
Moreover, \cite[Lemma 3.7]{li18} says that there exists a unique action $G\curvearrowright \La^\infty$ by defining
$$(g\cdot x)(m,n):=\varphi(g,x(0,m)) \cdot x(m,n)  \hspace{7mm} ((m,n)\in \Omega_k)$$
for every $g\in G$ and $x\in\La^\infty$.


\begin{defn}\label{defn5.1}
A self-similar $k$-graph $(G,\La)$ is said to be \emph{pseudo free} if for any $g\in G$ and $\mu\in \La$, $g\cdot \mu=\mu$ and $\varphi(g,\mu)=e_G$ imply $g=e_G$.
\end{defn}

According to \cite[Lemma 5.6]{li18}, in case $(G,\La)$ is pseudo free, then we have
$$g\cdot \mu=h \cdot \mu ~~ \mathrm{and} ~~ \varphi(g,\mu)=\varphi(h,\mu) ~~ \Longrightarrow ~~ g=h$$
for every $g,h\in G$ and $\mu\in \La$.

\begin{defn}
Associated to $(G,\La)$ we define the subgroupoid
$$\g:=\left\{\big(\mu(g\cdot x);\mathcal{T}_{d(\mu)}([\varphi(g,x)]),d(\mu)-d(\nu);\nu x\big) :g\in G, ~\mu,\nu\in \La,~ s(\mu)=g\cdot s(\nu) \right\}$$
of $\La^\infty\times \big(Q(\N^k,G)\rtimes_{\mathcal{T}} \Z^k\big) \times \La^\infty$ with the range and source maps
$$r(x;[f],n-m;y)=x  \hspace{5mm} \mathrm{and} \hspace{5mm} s(x;[f],n-m;y)=y.$$
\end{defn}

Note that if we set
$$Z(\mu,g,\nu):=\left\{\big(\mu(g\cdot x);\mathcal{T}_{d(\mu)}([\varphi(g,x)]),d(\mu)-d(\nu);\nu x\big) :x\in s(\nu)\La^\infty \right\},$$
then the basis
$$\mathcal{B}_{G,\La}:=\left\{Z(\mu,g,\nu):\mu,\nu\in \La,g\in G,s(\mu)=g\cdot s(\nu)\right\}$$
induces a topology on $\g$. In case $(G,\La)$ is pseudo free, \cite[Proposition 3.11]{li19} shows that $\g$ is a Hausdorff groupoid with compact open base $\mathcal{B}_{G,\La}$.

\begin{defn}
Let $(G,\La)$ be a pseudo free self-similar $k$-graph and $R$ a unital commutative $*$-ring. Then \emph{the Steinberg algebra associated to $(G,\La)$} is the $R$-algebra
$$A_R(\g):=\mathrm{span}_R\{1_B: B ~ \mathrm{is ~ a ~ compact ~ open ~ bisection}\}$$
endowed with the pointwise addition, the multiplication $fg(\gamma):=\sum_{\alpha\beta=\gamma}f(\alpha) g(\beta)$, and the involution $f^*(\gamma):=f(\gamma^{-1})^*$ for all $\gamma\in \g$.
\end{defn}

To prove Theorem \ref{steib-algeb-thm} below, we need the following lemma.

\begin{lem}\label{lem4.2}
Let $(G,\La)$ be a pseudo free self-similar $k$-graph. Let $v,w\in \La^0$ and $g,h\in G$ with $g\cdot v=h\cdot v=w$. Then $Z(v,g,w)\cap Z(v,h,w)= \emptyset $ whenever $g\neq h$.
\end{lem}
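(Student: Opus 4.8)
The plan is to show that an element lying in the intersection would produce a group element $g^{-1}h$ which acts trivially on a certain infinite path while having trivial cocycle along it, and then to invoke pseudo freeness (in the strengthened form quoted after Definition \ref{defn5.1}) to conclude $g=h$, contradicting the hypothesis. Concretely, suppose $\gamma \in Z(v,g,w)\cap Z(v,h,w)$. Writing out membership in each basic set, there are $x,y \in v\La^\infty$ with
\[
\gamma = \big(g.x;\mathcal{T}_0([\varphi(g,x)]),0;x\big) = \big(h.y;\mathcal{T}_0([\varphi(h,y)]),0;y\big),
\]
since $d(v)=0$ and $s(v)=v$. Comparing the source coordinates gives $x=y$, comparing the $\Z^k$-coordinates is automatic, comparing the range coordinates gives $g.x=h.x$ in $\La^\infty$, and comparing the middle coordinates gives $[\varphi(g,x)]=[\varphi(h,x)]$ in $Q(\N^k,G)$.

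Next I would unwind these two equalities at the level of finite paths. From $g.x = h.x$ and the formula $(g.x)(0,n)=\varphi(g,x(0,0)).x(0,n)$ together with the compatibility of the action with $r$ and $s$, one extracts that $g.x(0,n) = h.x(0,n)$ for every $n\in\N^k$; in particular $g.\mu = h.\mu$ for $\mu := x(0,n)$. From $[\varphi(g,x)]=[\varphi(h,x)]$ there is some $n_0\in\N^k$ with $\varphi(g,x(0,n)) = \varphi(h,x(0,n))$ for all $n\ge n_0$; that is, $\varphi(g,\mu)=\varphi(h,\mu)$ for $\mu := x(0,n)$ with $n\ge n_0$. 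Choosing any single $n\ge n_0$ and setting $\mu = x(0,n)$, we have simultaneously $g.\mu = h.\mu$ and $\varphi(g,\mu) = \varphi(h,\mu)$, so the pseudo free hypothesis (via \cite[Lemma 5.6]{li18}, as recalled above) forces $g=h$. This contradicts $g\ne h$, so the intersection is empty.

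The main obstacle I anticipate is the bookkeeping in the second paragraph: passing correctly from the equality of infinite paths $g.x=h.x$ and the $\sim$-equivalence of the cocycle functions to a \emph{single} finite path $\mu$ on which both the action and the cocycle of $g$ and $h$ agree. The action statement holds for all $n$, while the cocycle statement holds only eventually, so one must take $n$ large enough to satisfy the $\sim$-relation; this is harmless since $\La$ has no sources and hence $v\La^n\ne\emptyset$ for every $n$, guaranteeing that $x(0,n)$ exists. One should also double-check the trivial coordinate matchings (the $\Z^k$-component is $d(v)-d(w)=0$ on both sides, and $\mathcal{T}_{d(v)}=\mathcal{T}_0=\mathrm{id}$), but these are immediate. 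No delicate topology is needed here — the argument is purely set-theoretic about the points of the two basic bisections.
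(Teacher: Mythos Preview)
Your proof is correct and follows the same approach as the paper: compare coordinates of an element in the intersection to obtain $x=y$, $g.x=h.x$, and $[\varphi(g,x)]=[\varphi(h,x)]$, then invoke pseudo freeness (the paper cites \cite[Corollary~5.6]{li18} directly, whereas you unpack that citation into the explicit finite-path argument). One small slip: by the definition of $Z(v,g,w)$ the infinite paths $x,y$ range over $s(w)\La^\infty=w\La^\infty$ rather than $v\La^\infty$, but this does not affect the argument.
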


\begin{proof}
Suppose that $(g\cdot x;[\varphi(g,x)],0;x)=(h \cdot y,[\varphi(h,y)],0;y)\in Z(v,g,w)\cap Z(v,h,w)$ where $x,y\in Z(w)$. Then $y=x$, $g\cdot x=h \cdot x$ and $[\varphi(g,x)]=[\varphi(h,x)]$. Since $(G,\La)$ is pseudo free, \cite[Corollary 5.6]{li18} implies that $g=h$.
\end{proof}

\begin{thm}\label{steib-algeb-thm}
Let $(G,\La)$ be a pseudo free self-similar $k$-graph. Then there is a (unique) $*$-algebra isomorphism $\phi:\ep\rightarrow A_R(\g)$ such that
$$\phi(s_\mu)=1_{Z(\mu, e_G, s(\mu))} \hspace{5mm} \mathrm{and} \hspace{5mm}  \phi(u_{v,g})=1_{Z(v,g,g^{-1} \cdot v )}$$
for every $\mu\in\La$, $v\in\La^0$, and $g\in G$. In particular, the elements $rs_\mu$ and $ru_{v,g}$ with $r\in R\setminus\{0\}$ are all nonzero.
\end{thm}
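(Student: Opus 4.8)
The plan is to construct $\phi$ using the universal property of $\ep$, then verify injectivity via the graded uniqueness theorem (Theorem \ref{grad-uniq-thm}), and finally surjectivity by identifying a spanning set. First I would check that the proposed assignment
$$s_\mu\mapsto 1_{Z(\mu,e_G,s(\mu))},\qquad u_{v,g}\mapsto 1_{Z(v,g,g^{-1}.v)}$$
defines an Exel-Pardo $(G,\La)$-family in $A_R(\g)$. This amounts to checking relations (1)--(5) of Definition \ref{defn2.4} at the level of characteristic functions of the basic bisections $Z(\mu,g,\nu)$. The key computational input is the groupoid multiplication formula for products of such bisections: one shows $Z(\mu,g,\nu)\,Z(\alpha,h,\beta)$ is again a (finite disjoint union of) basic bisection(s), governed exactly by the self-similar relations (a)--(d) of Definition \ref{defn2.2} — in particular $Z(\mu,g,\nu)\cdot Z(\nu,h,\beta)=Z(\mu,gh,\beta)$ when $s(\nu)=h.s(\beta)$, and the Kumjian-Pask relation (CK4) corresponds to the cover $s(\mu)\La^n=\bigsqcup Z(\mu\la,e_G,\mu\la)$ partitioning $Z(\mu,e_G,s(\mu))$. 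Since these $Z$-sets are compact open bisections (pseudo freeness, via \cite[Proposition 3.11]{li19}), the indicator functions lie in $A_R(\g)$ and multiply/adjoint correctly. Granting this, universality yields a $*$-homomorphism $\phi:\ep\to A_R(\g)$.

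For surjectivity, I would use the fact that $A_R(\g)=\mathrm{span}_R\{1_{Z(\mu,g,\nu)}: s(\mu)=g.s(\nu)\}$ — this follows because $\mathcal{B}_{G,\La}$ is a basis of compact open bisections and any compact open bisection is a finite disjoint union of basic ones, so its indicator is a sum of the $1_{Z(\mu,g,\nu)}$. Then I observe
$$1_{Z(\mu,g,\nu)}=\phi(s_\mu)\,\phi(u_{s(\mu),g})\,\phi(s_\nu^*)=\phi\bigl(s_\mu u_{s(\mu),g}s_\nu^*\bigr),$$
using the multiplication rule for the $Z$-sets computed above together with $g^{-1}.s(\mu)=s(\nu)$. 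Comparing with the description $\ep=\mathrm{span}_R\{s_\mu u_{s(\mu),g}s_\nu^*\}$ from Proposition \ref{prop2.7}, surjectivity is immediate.

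Injectivity is where Theorem \ref{grad-uniq-thm} does the work, and this is the step I expect to be the main obstacle. I first need $\phi$ to be $\Z^k$-graded: $A_R(\g)$ carries a $\Z^k$-grading coming from the cocycle $(x;[f],n;y)\mapsto n$ on $\g$, and $1_{Z(\mu,g,\nu)}$ is supported where the degree coordinate equals $d(\mu)-d(\nu)$, so $\phi$ respects the gradings. Then, by Theorem \ref{grad-uniq-thm}, it suffices to show $\phi(a)\neq 0$ for every nonzero $a=\sum_{i=1}^l r_i u_{v,g_i}$ with $g_i^{-1}.v=g_j^{-1}.v$ for all $i,j$. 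Writing $w:=g_i^{-1}.v$ (independent of $i$), we have $\phi(a)=\sum_i r_i\,1_{Z(v,g_i,w)}$. The crucial point is that the bisections $Z(v,g_i,w)$ are \emph{pairwise disjoint} when the $g_i$ are distinct: this is exactly Lemma \ref{lem4.2} (after discarding repeated group elements and summing their coefficients, which reduces to the case of distinct $g_i$; if some coefficient sum vanishes one drops that term, and nonvanishing of $a$ guarantees at least one surviving term). Hence $\phi(a)$, evaluated at a point of $Z(v,g_{i_0},w)$ not lying in any other $Z(v,g_i,w)$, equals $r_{i_0}\neq 0$ (here we need $R\to$ functions to be faithful, i.e.\ $r\cdot 1_{\{pt\}}\ne 0$ for $r\ne0$, which holds since $R$ is a ring with identity acting on itself). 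Thus $\phi(a)\neq 0$ and $\phi$ is injective. The final sentence of the statement — that $rs_\mu$ and $ru_{v,g}$ are nonzero for $r\neq 0$ — then follows since $\phi$ is injective and $\phi(rs_\mu)=r\,1_{Z(\mu,e_G,s(\mu))}\neq 0$, $\phi(ru_{v,g})=r\,1_{Z(v,g,g^{-1}.v)}\neq 0$, as these $Z$-sets are nonempty (no sources guarantees $s(\mu)\La^\infty\neq\emptyset$).
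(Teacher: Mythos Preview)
Your proposal is correct and follows essentially the same approach as the paper: construct the $(G,\La)$-family of indicator functions, invoke universality, obtain surjectivity from the spanning set $\{1_{Z(\mu,g,\nu)}\}$ (the paper factors $Z(\mu,g,\nu)$ as a product of three basic bisections, which is equivalent to your product formula), and prove injectivity via Theorem \ref{grad-uniq-thm} together with Lemma \ref{lem4.2}. The only minor addition in your write-up is the explicit justification that the $Z$-sets are nonempty for the ``in particular'' clause, which the paper leaves implicit.
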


\begin{proof}
For each $v\in\La^0$, $\mu\in\La$ and $g\in G$, define
$$S_\mu:=1_{Z(\mu, e_G, s(\mu))} \hspace{5mm} \mathrm{and} \hspace{5mm} U_{v,g}:=1_{Z(v,g,g^{-1} \cdot v )}.$$
Since $S_\mu^*=1_{Z(\mu,e_G,s(\mu))^{-1}}=1_{Z(s(\mu),e_G,\mu)}$ and $U_{v,g}^*=1_{Z(v,g,g^{-1} \cdot v )^{-1}}=1_{Z(g^{-1} \cdot v ,g^{-1},v)}$, a long but straightforward computation shows that $\{S_\mu,U_{v,g}\}$ is a $(G,\La)$-family in $A_R(\g)$. Then, by the universal property, such $*$-homomorphism $\phi$ exists.

\cite[Proposition 3.11]{li19} says that $\g$ is ample with compact open base $\mathcal{B}_{G,\La}$. Since each element $Z(\mu,g,\nu)$ of $\mathcal{B}_{G,\La}$ can be written as
$$Z(\mu,g,\nu)=Z(\mu,e_G,s(\mu))Z(s(\mu),g,s(\nu))Z(\nu,e_G,s(\nu))^{-1},$$
$\phi$ is surjective.

We will show the injectivity of $\phi$ by applying the graded uniqueness theorem. Note that the continuous 1-cocycle $c:\g\rightarrow \Z^k$, defined by $c(\mu(g\cdot x);[f],d(\mu)-d(\nu);\nu x):=d(\mu)-d(\nu)$, induces a $\Z^k$-grading on $A_R(\g)$. Also, $\phi$ preserves the $\Z^k$-grading because it does on the generators. Now, to apply Theorem \ref{grad-uniq-thm}, we assume $\phi(a)=0$ for an element of the form $a=\sum_{i=1}^l r_i u_{v,g_i}$ with $g_i^{-1} \cdot v =g_j^{-1} \cdot v $ for $1\leq i,j\leq l$. We may also assume that the $g_i$'s are distinct (otherwise, combine the terms with same $g_i$'s). We then have
$$\phi(a)=\sum_{i=1}^l r_i1_{Z(v,g_i,g_i^{-1} \cdot v )}=0.$$
Lemma \ref{lem4.2} says that the bisections $Z(v,g_i,g_i^{-1} \cdot v )$ are pairwise disjoint. Hence, for each $i$, if we pick some $\alpha\in Z(v,g_i,g_i^{-1} \cdot v )$, then $r_i=\phi(a)(\alpha)=0$. Therefore $a=0$, and Theorem \ref{grad-uniq-thm} concludes that $\phi$ is injective. We are done.
\end{proof}

Combining \cite[Theorem 6.7]{ste10}, \cite[Theorem 5.9]{li18}, and Theorem \ref{steib-algeb-thm} gives the next corollary. (Although in \cite{li18} it is supposed $|\La^0|<\infty$, but \cite[Theorem 5.9]{li18} holds also for $\La$ with infinitely many vertices.)

\begin{cor}
Let $(G,\La)$ be a pseudo free self-similar $k$-graph over an amenable group $G$. Then the complex algebra $\mathrm{EP}_\C(G,\La)$ is a dense subalgebra of $\mathcal{O}_{G,\La}$ introduced in \cite{li19}.
\end{cor}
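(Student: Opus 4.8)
The plan is to assemble three ingredients. First, Theorem~\ref{steib-algeb-thm} gives a $*$-algebra isomorphism $\mathrm{EP}_\C(G,\La)\cong A_\C(\g)$, so it suffices to realize $A_\C(\g)$ as a dense $*$-subalgebra of $\mathcal{O}_{G,\La}$. Second, \cite[Theorem 6.7]{ste10} identifies the complex Steinberg algebra $A_\C(\mathcal{G})$ of a Hausdorff ample groupoid $\mathcal{G}$ with $C_c(\mathcal{G})$, the continuous compactly supported functions, which sits densely inside the groupoid $C^*$-algebra $C^*(\mathcal{G})$ — and when $\mathcal{G}$ is amenable, $C^*(\mathcal{G})=C^*_r(\mathcal{G})$. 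Third, \cite[Theorem 5.9]{li18} identifies $\mathcal{O}_{G,\La}$ with the groupoid $C^*$-algebra $C^*(\g)$. Chaining these gives $\mathrm{EP}_\C(G,\La)\cong A_\C(\g)=C_c(\g)\hookrightarrow C^*(\g)\cong\mathcal{O}_{G,\La}$, with dense image.

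The steps, in order, are: (1) invoke \cite[Proposition 3.11]{li19} to note that under the pseudo-freeness hypothesis $\g$ is a Hausdorff ample groupoid, so \cite[Theorem 6.7]{ste10} applies and $A_\C(\g)\cong C_c(\g)$ as $*$-algebras via the obvious map sending a characteristic function $1_B$ of a compact open bisection to itself viewed in $C_c(\g)$; (2) use amenability of $G$ to conclude $\g$ is an amenable groupoid — this is where a small remark is needed, since $\g$ is built from $\La^\infty$, the semidirect-product bundle $Q(\N^k,G)\rtimes_{\mathcal{T}}\Z^k$, and the pair groupoid coordinates, and amenability passes through these constructions when $G$ (hence $C(\N^k,G)$, $Q(\N^k,G)$, and the $\Z^k$-extension) is amenable; (3) recall that $C_c(\g)$ is dense in $C^*(\g)=C^*_r(\g)$ in the $C^*$-norm; (4) transport along the isomorphism $\mathcal{O}_{G,\La}\cong C^*(\g)$ of \cite[Theorem 5.9]{li18}, checking that under this isomorphism the canonical generators of $\mathcal{O}_{G,\La}$ correspond to the characteristic functions $1_{Z(\mu,e_G,s(\mu))}$ and $1_{Z(v,g,g^{-1}.v)}$, which are exactly the images of $s_\mu$ and $u_{v,g}$ under $\phi$ from Theorem~\ref{steib-algeb-thm}; (5) compose the maps and read off that $\phi$ followed by the inclusion $C_c(\g)\hookrightarrow C^*(\g)$ followed by the isomorphism of \cite[Theorem 5.9]{li18} is an injective $*$-homomorphism $\mathrm{EP}_\C(G,\La)\to\mathcal{O}_{G,\La}$ with dense range.

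I expect the main obstacle to be step (2), the amenability of $\g$: one must be careful that \cite[Theorem 5.9]{li18} (the isomorphism $\mathcal{O}_{G,\La}\cong C^*(\g)$) is stated under the amenability assumption and that this is precisely what lets us pass from full to reduced groupoid $C^*$-algebra, which in turn is what makes $C_c(\g)$ a \emph{$C^*$-norm-dense} subalgebra rather than merely an algebraic one. The parenthetical caveat in the statement — that \cite[Theorem 5.9]{li18} holds for infinite $\La^0$ as well, even though \cite{li18} assumes $|\La^0|<\infty$ — should be addressed by noting that the groupoid $\g$ and the arguments identifying $\mathcal{O}_{G,\La}$ with $C^*(\g)$ never use finiteness of $\La^0$, only row-finiteness and absence of sources, which are our standing hypotheses. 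Everything else is routine bookkeeping with the correspondence of generators, which Theorem~\ref{steib-algeb-thm} has already set up explicitly.
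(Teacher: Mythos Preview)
Your proposal is correct and follows essentially the same route as the paper: the paper's entire proof is the single sentence ``Combining \cite[Theorem 6.7]{ste10}, \cite[Theorem 5.9]{li18}, and Theorem \ref{steib-algeb-thm} gives the following,'' together with the parenthetical remark that \cite[Theorem 5.9]{li18} extends to infinite $\La^0$. Your write-up simply unpacks this chain in more detail (including the amenability of $\g$ and the matching of generators), but the ingredients and their assembly are identical.
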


In the following, we see that the Kumjian-Pask algebra $\mathrm{KP}_R(\La)$ from \cite{pin13} can be embedded in $\ep$.

\begin{cor}
Let $(G,\La)$ be a pseudo free self-similar $k$-graph. Let the Kumjian-Pask algebra $\mathrm{KP}_R(\La)$ be generated by a Kumjian-Pask $\La$-family $\{t_\mu:\mu\in\La\}$. Then the map $t_\mu\mapsto s_\mu$ embeds $\mathrm{KP}_R(\La)$ into $\ep$ as a $*$-subalgebra.
\end{cor}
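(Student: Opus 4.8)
The plan is to build an explicit $*$-homomorphism $\psi\colon \mathrm{KP}_R(\La)\to\ep$ using the universal property of the Kumjian-Pask algebra, and then to prove it is injective by invoking the graded uniqueness theorem for $\ep$ (Theorem~\ref{grad-uniq-thm}) together with the corresponding graded uniqueness theorem for Kumjian-Pask algebras. First I would observe that the generating family $\{s_\mu:\mu\in\La\}\subseteq\ep$ is by definition (condition (1) of Definition~\ref{defn2.4}) a Kumjian-Pask $\La$-family in the $*$-algebra $\ep$; hence the universal property of $\mathrm{KP}_R(\La)$ from \cite{pin13} yields a unique $R$-algebra $*$-homomorphism $\psi\colon\mathrm{KP}_R(\La)\to\ep$ with $\psi(t_\mu)=s_\mu$ for all $\mu\in\La$. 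Since $\mathrm{KP}_R(\La)=\mathrm{span}_R\{t_\mu t_\nu^*: \mu,\nu\in\La,\ s(\mu)=s(\nu)\}$, the image of $\psi$ is exactly $\mathrm{span}_R\{s_\mu s_\nu^*\}$, which is a $*$-subalgebra of $\ep$; so surjectivity onto this subalgebra is automatic and the content is injectivity.

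For injectivity I would use that $\mathrm{KP}_R(\La)$ carries a natural $\Z^k$-grading with $(\mathrm{KP}_R(\La))_n=\mathrm{span}_R\{t_\mu t_\nu^*:d(\mu)-d(\nu)=n\}$, and that $\psi$ is a graded homomorphism since it sends $t_\mu$ (degree $d(\mu)$) to $s_\mu$ (degree $d(\mu)$) and respects $*$, hence sends $(\mathrm{KP}_R(\La))_n$ into $\ep_n$. Now the graded uniqueness theorem for Kumjian-Pask algebras (\cite[Theorem~4.1]{pin13}, or one may re-derive it as the special case $G=\{e_G\}$ of Theorem~\ref{grad-uniq-thm} — note that a pseudo free self-similar $k$-graph with trivial group is just any $\La$, and $\ep$ with $G=\{e_G\}$ is $\mathrm{KP}_R(\La)$) reduces the problem to checking that $\psi$ is injective on the vertex span: if $a=\sum_{i} r_i t_{v_i}$ is a nonzero element with all $v_i\in\La^0$ distinct and $\psi(a)=\sum_i r_i s_{v_i}=0$, we must derive a contradiction. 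But by Theorem~\ref{steib-algeb-thm} we know $\ep\cong A_R(\g)$ with $s_v\mapsto 1_{Z(v,e_G,v)}$, and the bisections $Z(v,e_G,v)$ for distinct vertices $v$ are pairwise disjoint (they have ranges in disjoint cylinder sets $Z(v)\subseteq\La^\infty$), so evaluating $\psi(a)$ at any point of $Z(v_i,e_G,v_i)$ gives $r_i=0$; hence $a=0$, a contradiction. Thus $\psi$ is injective on homogeneous vertex combinations, and the graded uniqueness theorem upgrades this to injectivity of $\psi$ on all of $\mathrm{KP}_R(\La)$.

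Alternatively, and perhaps more cleanly, one can bypass a separate graded uniqueness theorem for $\mathrm{KP}_R(\La)$ entirely: since $(G,\La)$ is pseudo free, Theorem~\ref{steib-algeb-thm} identifies $\ep$ with $A_R(\g)$, and one checks directly that $\psi$ composed with this isomorphism is an injective map $\mathrm{KP}_R(\La)\to A_R(\g)$ by comparing with the known Steinberg-algebra model $\mathrm{KP}_R(\La)\cong A_R(\mathcal{G}_\La)$ of the path groupoid of $\La$ (the case $G=\{e_G\}$) and noting that the inclusion $\mathcal{G}_\La\hookrightarrow\g$, $(\mu x;d(\mu)-d(\nu);\nu x)\mapsto(\mu x;\mathcal{T}_{d(\mu)}([\mathbf{1}]),d(\mu)-d(\nu);\nu x)$ is a continuous injective open groupoid homomorphism onto a clopen subgroupoid, which induces an injection on Steinberg algebras. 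Either route works; I would present the first since it is self-contained within the paper.

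The main obstacle I anticipate is making precise that $\psi$ preserves enough structure to feed into the graded uniqueness theorem without a lengthy detour — specifically, being careful that the ``$G=\{e_G\}$'' instance of Theorem~\ref{grad-uniq-thm} literally is graded uniqueness for $\mathrm{KP}_R(\La)$, so that its hypothesis (nonvanishing on sums $\sum r_i u_{v,g_i}$, which when $G$ is trivial collapses to nonvanishing on $r\,s_v$ for each vertex $v$) is exactly what the disjointness of the $Z(v,e_G,v)$ supplies via Theorem~\ref{steib-algeb-thm}. Once that bookkeeping is arranged, the remaining steps are routine: verifying the $\La$-family axioms for $\{s_\mu\}$ (immediate from Definition~\ref{defn2.4}(1)), gradedness of $\psi$, and the evaluation argument at a point of the cylinder bisection.
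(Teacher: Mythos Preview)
Your proposal is correct and follows essentially the same route as the paper: build $\psi$ from the universal property of $\mathrm{KP}_R(\La)$, observe it is $\Z^k$-graded, invoke Theorem~\ref{steib-algeb-thm} to see that $\psi(rt_v)=rs_v\neq 0$ for $r\neq 0$, and then apply the graded uniqueness theorem for Kumjian-Pask algebras \cite[Theorem~4.1]{pin13}. The only cosmetic difference is that the paper verifies $rs_\mu\neq 0$ for all $\mu\in\La$ (more than strictly required) while you focus on vertices; your alternative groupoid-embedding argument is not in the paper but is a valid second proof.
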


\begin{proof}
We know that $\mathrm{KP}_R(\La)$ is $\Z^k$-graded by the homogenous components
$$\mathrm{KP}_R(\La)_n:=\mathrm{span}_R \left\{t_\mu t_\nu^*:\mu,\nu\in \La, d(\mu)-d(\nu)=n\right\}.$$
for all $n\in\Z^k$. Then, the universal property of Kumjian-Pask algebras gives a graded $*$-algebra homomorphism $\phi:\mathrm{KP}_R(\La)\rightarrow \ep$ such that $\phi(t_\mu):=s_\mu$ and $\phi(t_\mu^*):=s_\mu^*$ for every $\mu\in\La$. Moreover, Theorem \ref{steib-algeb-thm} shows that $\phi(rt_\mu)=rs_\mu\neq 0$ for all $ r\in R\setminus \{0\}$ and $\mu\in \La$. Therefore, the graded uniqueness theorem for Kumjian-Pask algebras \cite[Theorem 4.1]{pin13} implies that $\phi$ is injective.
\end{proof}

\begin{defn}
Let $\mathcal{G}$ be a topological groupoid. We say that $\mathcal{G}$ is \emph{topologically principal} if the set of units with trivial isotropy group, that is $\{u\in \mathcal{G}^{(0)}: s^{-1}(u)\cap r^{-1}(u)=\{u\}\}$, is dense in $\mathcal{G}^{(0)}$.
\end{defn}

The analogue of the topologically principal property for self-similar $k$-graphs is $G$-aperiodicity (see \cite[Proposition 6.5]{li18}).

\begin{defn}
Let $(G,\La)$ be a self-similar $k$-graph. Then $\La$ is said to be \emph{$G$-aperiodic} if for every $v\in \La^0$, there exists $x\in v\La^\infty$ with the property that
$$x(p,\infty)=g\cdot x(q,\infty) ~~\Longrightarrow ~~  g=e_G ~~ \mathrm{and} ~~ p=q \hspace{5mm} (\forall g\in G,~ \forall p,q\in \N^k).$$
\end{defn}

\begin{thm}[The Cuntz-Krieger uniqueness]
Let $(G,\La)$ be a pseudo free self-similar $k$-graph. Let $(G,\La)$ be also $G$-aperiodic. Suppose that $\phi:\ep\rightarrow A$ is a $*$-algebra homomorphism from $\ep$ into a $*$-algebra $A$ such that $\phi(rs_v)\neq 0$ for all $0\neq r\in R$ and $v\in \La^0$. Then $\phi$ is injective.
\end{thm}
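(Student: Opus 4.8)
The plan is to argue through the Steinberg algebra model of Section~\ref{sect5}. Since $(G,\La)$ is pseudo free, Theorem~\ref{steib-algeb-thm} provides a $*$-isomorphism $\phi_0:\ep\rightarrow A_R(\g)$ (we write $\phi_0$ to reserve $\phi$ for the homomorphism in the statement), and \cite[Proposition 3.11]{li19} shows that $\g$ is Hausdorff, ample, with compact open base $\mathcal{B}_{G,\La}$; since $(G,\La)$ is moreover $G$-aperiodic, $\g$ is topologically principal by \cite[Proposition 6.5]{li18}, i.e.\ the units with trivial isotropy are dense in $\g^{(0)}$. Thus it suffices to prove the following Cuntz--Krieger uniqueness statement: \emph{any ring $*$-homomorphism $\rho:A_R(\g)\rightarrow A$ whose restriction to the diagonal subalgebra $A_R(\g^{(0)})$ is injective is itself injective.} Applying this to $\rho:=\phi\circ\phi_0^{-1}$, it remains only to check that $\phi$ is injective on $\mathcal{D}:=\mathrm{span}_R\{s_\mu s_\mu^*:\mu\in\La\}$, because $\phi_0(s_\mu s_\mu^*)=1_{Z(\mu,e_G,\mu)}$, the sets $Z(\mu,e_G,\mu)$ are exactly the cylinder sets of $\g^{(0)}$, and their characteristic functions span $A_R(\g^{(0)})$, so $\phi_0$ carries $\mathcal{D}$ onto the diagonal.

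For the diagonal step, using (KP4) to rewrite each generator $s_\mu s_\mu^*$ of $\mathcal{D}$ as $\sum_{\alpha\in s(\mu)\La^{n-d(\mu)}}s_{\mu\alpha}s_{\mu\alpha}^*$, an arbitrary element of $\mathcal{D}$ can be put in the form $a=\sum_{\mu\in F}r_\mu s_\mu s_\mu^*$ with $F\subseteq\La^n$ finite for a single $n\in\N^k$. For $\mu,\nu\in\La^n$ one has $s_\mu^*s_\nu=\delta_{\mu,\nu}\,s_{s(\mu)}$ (a standard consequence of the Kumjian--Pask relations; cf.\ \cite[Lemma 3.3]{pin13}), so $s_\nu^*\,a\,s_\nu=r_\nu s_{s(\nu)}$ for every $\nu\in F$. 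Hence $\phi(a)=0$ forces $\phi(r_\nu s_{s(\nu)})=\phi(s_\nu)^*\phi(a)\phi(s_\nu)=0$, and the hypothesis $\phi(rs_v)\neq0$ for $0\neq r\in R$ gives $r_\nu=0$ for all $\nu\in F$, i.e.\ $a=0$. Therefore $\phi|_{\mathcal{D}}$ is injective, and the theorem follows once the uniqueness statement above is proved.

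Proving that statement is the technical core; it is the standard Cuntz--Krieger argument for Steinberg algebras of topologically principal Hausdorff ample groupoids, and could alternatively be quoted from the literature on uniqueness theorems over commutative $*$-rings. Suppose $0\neq a\in\ker\rho$. One first reduces to the case $a_0:=a|_{\g^{(0)}}\neq0$: if $a_0=0$, pick $\gamma_0$ with $a(\gamma_0)\neq0$ and a compact open bisection $B\ni\gamma_0$ disjoint from $\g^{(0)}$ (such $B$ exists since $\g^{(0)}$ is clopen and $\mathcal{B}_{G,\La}$ is a base), and replace $a$ by $a\,1_{B^{-1}}$, which is still a nonzero element of $\ker\rho$ and satisfies $(a\,1_{B^{-1}})(r(\gamma_0))=a(\gamma_0)\neq0$. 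Now choose a nonempty compact open $V\subseteq\g^{(0)}$ on which $a_0\equiv c\neq0$, and write $a-a_0=\sum_j d_j1_{D_j}$ with each $D_j$ a compact open bisection disjoint from $\g^{(0)}$. By topological principality there is a unit $u_0\in V$ with trivial isotropy; each $D_j$ induces a partial homeomorphism $\theta_j$ of $\g^{(0)}$ (via $r$ and $s$, which restrict to homeomorphisms on $D_j$), and since $u_0$ has trivial isotropy, $\theta_j(u_0)\neq u_0$ whenever $u_0\in r(D_j)$. Using that $\g^{(0)}$ is Hausdorff with a basis of compact opens and that each $r(D_j)$ is compact, one may shrink $V$ to a compact open neighborhood $W$ of $u_0$ with $\theta_j(W\cap r(D_j))\cap W=\emptyset$ for all $j$. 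For this $W$ we get $1_W(a-a_0)1_W=0$, hence $1_W\,a\,1_W=c\,1_W\in A_R(\g^{(0)})$ is nonzero, so $\rho(c\,1_W)=\rho(1_W)\rho(a)\rho(1_W)=0$, contradicting injectivity of $\rho$ on the diagonal. The hard part is exactly the construction of this ``wandering'' neighborhood $W$, and this is precisely where topological principality---equivalently, the $G$-aperiodicity of $(G,\La)$---is used in an essential way; everything else is routine bookkeeping with the base $\mathcal{B}_{G,\La}$ and the $(G,\La)$-relations.
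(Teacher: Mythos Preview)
Your proof is correct and follows essentially the same route as the paper: pass to the Steinberg algebra via Theorem~\ref{steib-algeb-thm}, use that $\g$ is Hausdorff, ample and (by $G$-aperiodicity) topologically principal, and then invoke the Cuntz--Krieger uniqueness theorem for Steinberg algebras. The only difference is packaging: the paper simply cites \cite[Theorem~3.2]{cla15} in its contrapositive form (if $\widetilde{\phi}$ is not injective then it kills some $r1_K$ with $K\subseteq\g^{(0)}$ compact open, whence one restricts to a cylinder $Z(\mu,e_G,\mu)\subseteq K$ and deduces $\phi(rs_{s(\mu)})=0$), whereas you first verify injectivity on the diagonal $\mathcal{D}$ directly and then reprove the Steinberg-algebra uniqueness statement in full; these are two sides of the same argument.
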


\begin{proof}
First note that $\g$ is a Hausdorff ample groupoid by \cite[Proposition 3.11]{li19}, and that $\mathcal{B}_{G,\La}$ is a basis for $\g$ consisting compact open bisections. Also, \cite[Lemma 3.12]{li19} says that $\g$ is topologically principal (so is effective in particular). So, we may apply \cite[Theorem 3.2]{cla15}.

Denote by $\psi:\ep\rightarrow A_R(\g)$ the isomorphism of Theorem \ref{steib-algeb-thm}. If on the contrary $\phi$ is not injective, then neither is $\widetilde{\phi}:=\phi \circ \psi^{-1}:A_R(\g)\rightarrow A$. Thus, by \cite[Theorem 3.2]{cla15}, there exists a compact open subset $K\subseteq \g^{(0)}$ and $r\neq 0$ such that $\widetilde{\phi}(r1_K)=0$. Since $K$ is open, there is a unit $U=Z(\mu,e_G,\mu)\in \mathcal{B}_{G,\La}$ such that $U\subseteq K$. So we get
$$\phi(rs_\mu s_\mu^*)=\widetilde{\phi}(r1_U)=\widetilde{\phi}(r1_{U\cap K})=\widetilde{\phi}(r1_K) \widetilde{\phi}(1_U)=0,$$
and hence
$$\phi(rs_{s(\mu)})=\phi(s_\mu^*)\phi(rs_\mu s_\mu^*)\phi(s_\mu)=0.$$
This contradicts the hypothesis, and therefore $\phi$ is injective.
\end{proof}


\section{Ideal structure}

By an \emph{ideal} we mean a two-sided and self-adjoint one. In this section, we characterize basic, $\Z^k$-graded and diagonal-invariant ideals of $\ep$, which are exactly all basic $Q(\N^k,G)\rtimes_{\mathcal{T}}\Z^k$-graded ones.

Let $(G,\La)$ be a pseudo free self-similar $k$-graph. Since $\g$ is a Hausdorff ample groupoid \cite[Theorem 5.8]{li18}, $\g^{(0)}$ is both open and closed, and for every $f\in A_R(\g)$ the restricted function $f|_{\g^{(0)}}=f\chi_{\g^{(0)}}$ lies again in $A_R(\g)$. Then $A_R(\g^{(0)})$ is a $*$-subalgebra of $A_R(\g)$ and there is a conditional expectation $\e:A_R(\g)\rightarrow A_R(\g^{(0)})$ defined by $\e(f)=f|_{\g^{(0)}}$ for $f\in A_R(\g)$. Let $D:=\mathrm{span}_R\{s_\mu s_\mu^*:\mu\in\La\}$ be the diagonal of $\ep$. In light of Theorem \ref{steib-algeb-thm}, it is easy to check that the expectation is $\e:\ep\rightarrow D$ defined by
$$\e\big(s_\mu u_{s(\mu),g}s_\nu^*\big)=\delta_{\mu,\nu}\delta_{g,e_G} s_\mu s_\mu^* \hspace{5mm}  (\mu,\nu\in \La,~s(\mu)=g\cdot s(\nu)).$$

\begin{defn}
An ideal $I$ of $\ep$ is called {\it diagonal-invariant} whenever $\e(I)\subseteq I$. Also, $I$ is said to be \emph{basic} if $rs_v\in I$ implies $s_v\in I$ for all $v\in \La^0$ and $r\in R\setminus \{0\}$.
\end{defn}

\begin{defn}
Let $(G,\La)$ be a self-similar $k$-graph. A subset $H\subseteq \La^0$ is called
\begin{enumerate}
  \item \emph{$G$-hereditary} if $r(\mu)\in H ~~ \Longrightarrow ~~ g\cdot s(\mu)\in H$ for all $g\in G$ and $\mu\in \La$;
  \item \emph{$G$-saturated} if $v\in \La^0$ and $s(v\La^n) \subseteq H~~ \mathrm{for ~ some ~} n\in\N^k ~~ \Longrightarrow ~~ v\in H$.
\end{enumerate}
\end{defn}

In the following, given any $H\subseteq \La^0$, we denote by $I_H$ the ideal of $\ep$ generated by $\{s_v:v\in H\}$. Also, for each ideal $I$ of $\ep$, we define $H_I:=\{v\in \La^0: s_v\in I\}$.

To prove Theorem \ref{thm6.7} we need some structural lemmas about the ideals $I_H$ and associated quotients $\ep/I_H$.

 \begin{lem}\label{lem5.2}
If $I$ is an ideal of $\ep$, then $H_I:=\{v\in \Lambda^0:s_v\in I\}$ is a $G$-saturated $G$-hereditary subset of $\La^0$.
 \end{lem}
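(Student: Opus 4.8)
The plan is to verify the two defining properties of a $G$-saturated $G$-hereditary set directly from the relations in Definition \ref{defn2.4}, using that $I$ is a two-sided ideal. First I would check $G$-heredity: suppose $v = r(\mu)\in H_I$, so $s_v = s_{r(\mu)}\in I$. Using (CK2) and (CK3) one has $s_\mu = s_{r(\mu)}s_\mu = s_v s_\mu\in I$, hence $s_{s(\mu)} = s_\mu^* s_\mu\in I$, so $s(\mu)\in H_I$. To upgrade this to $g.s(\mu)\in H_I$ for arbitrary $g\in G$, I would apply relation (4) of Definition \ref{defn2.4}: from $s_\mu\in I$ we get $u_{g.r(\mu),g}s_\mu = s_{g.\mu}\,u_{g.s(\mu),\varphi(g,\mu)}\in I$, and then multiplying on the left by $s_{g.\mu}^*$ and using (CK3) together with relations (2),(3),(5) of Definition \ref{defn2.4} yields $s_{g.s(\mu)}\in I$ (note $u_{g.s(\mu),\varphi(g,\mu)}u_{g.s(\mu),\varphi(g,\mu)}^* = s_{g.s(\mu)}$ since $u$ restricted to a fixed vertex behaves like a partial unitary). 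Alternatively, and perhaps more cleanly, one can first prove $g.v\in H_I$ whenever $v\in H_I$ by writing $u_{g.v,g}s_v u_{g.v,g}^* = s_{g.v}$ using relations (2)–(5), and then combine with the heredity-along-edges step already established.

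For $G$-saturation: suppose $v\in\La^0$ and $s(v\La^n)\subseteq H_I$ for some $n\in\N^k$. For each $\mu\in v\La^n$ we have $s(\mu)\in H_I$, so $s_{s(\mu)}\in I$, and therefore $s_\mu = s_\mu s_{s(\mu)}\in I$ by (CK2)–(CK3), whence $s_\mu s_\mu^*\in I$. Summing over the finite set $v\La^n$ (finite by the standing row-finiteness hypothesis) and applying (CK4) gives $s_v = \sum_{\mu\in v\La^n} s_\mu s_\mu^*\in I$, i.e. $v\in H_I$. This is a short computation.

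I do not expect a genuine obstacle here; the statement is essentially bookkeeping. The one point requiring a little care is the $G$-heredity argument, specifically passing from $s(\mu)\in H_I$ to $g.s(\mu)\in H_I$: one must handle the $u_{v,g}$ generators correctly, using that relations (2) and (5) force $u_{v,g}u_{v,g}^* = u_{v,g}u_{g^{-1}.v,g^{-1}} = u_{v,e_G} = s_v$ when the vertices match, so that conjugation by $u_{g.v,g}$ sends $s_v$ to $s_{g.v}$. Keeping track of which vertex labels appear (and invoking Remark after Definition \ref{defn2.2} that $\varphi(g,\mu).v = g.v$) is the only place a sign-error-style slip could occur.
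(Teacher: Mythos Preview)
Your proposal is correct and is exactly the kind of routine verification the paper has in mind; the paper simply records the proof as ``straightforward'' and gives no details, so your explicit computations with (CK2)--(CK4) and relations (2)--(5) of Definition~\ref{defn2.4} are precisely what fills that in. Both of your routes for the $G$-heredity step (first obtaining $s(\mu)\in H_I$ and then conjugating by $u_{g.s(\mu),g}$, or working directly with $u_{g.r(\mu),g}s_\mu$) are fine, and your caution about tracking the vertex labels via the identity $\varphi(g,\mu).s(\mu)=g.s(\mu)$ is the right checkpoint.
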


\begin{proof}
The proof is straightforward.
\end{proof}

\begin{lem}\label{lem5.3}
Let $H$ be a $G$-saturated $G$-hereditary subset of $\La^0$ and $I_H$ the ideal of $\ep$ generated by $\{s_v:v\in H\}$. Then we have
\begin{equation}\label{equ5.1}
I_H=\mathrm{span}_R\left\{s_\mu u_{s(\mu),g}s_\nu^*:g\in G, ~ s(\mu)=g\cdot s(\nu)\in H\right\},
\end{equation}
and $I_H$ is a $\Z^k$-graded diagonal-invariant ideal.
\end{lem}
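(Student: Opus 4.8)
The plan is to prove the two assertions in Lemma \ref{lem5.3} in order: first the spanning description \eqref{equ5.1} of $I_H$, and then use that description to read off the grading and diagonal-invariance.

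For the spanning description, let $M$ denote the right-hand side of \eqref{equ5.1}. First I would check $M \subseteq I_H$: a typical spanning element $s_\mu u_{s(\mu),g} s_\nu^*$ with $s(\mu) = g.s(\nu) \in H$ can be rewritten, using (CK2) and (CK3) from the Kumjian-Pask relations together with relation (2) of Definition \ref{defn2.4}, as $s_\mu s_{s(\mu)} u_{s(\mu),g} s_\nu^* = s_\mu (s_{s(\mu)}) u_{s(\mu),g} s_\nu^*$, and since $s(\mu) \in H$ the middle factor $s_{s(\mu)}$ is one of the generators of $I_H$, so the whole product lies in $I_H$. For the reverse inclusion $I_H \subseteq M$, I would show that $M$ is already a (two-sided, self-adjoint) ideal of $\ep$ containing each $s_v$ with $v \in H$; since $I_H$ is the smallest such ideal, $I_H \subseteq M$ follows. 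That $M$ contains $s_v = s_v u_{v,e_G} s_v^*$ for $v \in H$ is immediate, and $M^* \subseteq M$ follows from $(s_\mu u_{s(\mu),g} s_\nu^*)^* = s_\nu u_{g^{-1}.s(\mu),g^{-1}} s_\mu^*$ (as computed in the proof of Proposition \ref{prop2.7}) together with the observation that $g^{-1}.s(\mu) = g^{-1}.(g.s(\nu)) = s(\nu) \in H$. So the crux is that $\ep \cdot M \subseteq M$ (and then $M \cdot \ep \subseteq M$ by taking adjoints). By Proposition \ref{prop2.7} it suffices to multiply a spanning element $s_\alpha u_{s(\alpha),h} s_\beta^*$ of $\ep$ on the left against a spanning element $s_\mu u_{s(\mu),g} s_\nu^*$ of $M$ and check the product stays in $M$; using the multiplication formula worked out in the proof of Proposition \ref{prop2.7}, the product is (up to a Kronecker delta) of the form $s_{\alpha(h.\mu')} u_{\ast,\ast} s_\nu^*$ or similar, where $\mu = \beta\mu'$ — the key point being that $G$-hereditarity of $H$ guarantees the new source vertex still lies in $H$ (it is obtained from $s(\mu) \in H$ by applying a group element and then taking a source along a path, which by $G$-hereditarity stays in $H$). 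This is the main obstacle: one must carefully track, through the multiplication formula, that whichever vertex ends up in the ``$u$''-slot of the resulting term is of the form $g'.s(\text{some path extending } \mu \text{ or } \nu)$ and hence lies in $H$; the bookkeeping uses Definition \ref{defn2.4}(2)(b)--(d) and the description of $G$-hereditarity.

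Once \eqref{equ5.1} is established, the grading assertion is immediate: each spanning element $s_\mu u_{s(\mu),g} s_\nu^*$ is homogeneous of degree $d(\mu) - d(\nu) \in \Z^k$, so $I_H$ is generated by homogeneous elements and is therefore $\Z^k$-graded (equivalently, $I_H = \bigoplus_n (I_H \cap \ep_n)$ since the listed spanning set is a union of subsets of the $\ep_n$'s). For diagonal-invariance, I would apply the conditional expectation $\e: \ep \to D$ termwise to a general element of $I_H$ written via \eqref{equ5.1}: using the formula $\e(s_\mu u_{s(\mu),g} s_\nu^*) = \delta_{\mu,\nu}\delta_{g,e_G}\, s_\mu s_\mu^*$, the only surviving terms are those with $\mu = \nu$ and $g = e_G$, and for such a term $s(\mu) = e_G.s(\mu) = s(\mu) \in H$, so $s_\mu s_\mu^* = s_\mu u_{s(\mu),e_G} s_\mu^*$ is again a spanning element of the right-hand side of \eqref{equ5.1}, hence lies in $I_H$. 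Therefore $\e(I_H) \subseteq I_H$, completing the proof.

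One should also double-check the harmless edge cases — e.g.\ that the spanning set in \eqref{equ5.1} is non-empty exactly when $H \neq \emptyset$, and that the identities $s_\mu = s_\mu u_{s(\mu),e_G} s_{s(\mu)}^*$ and $u_{v,g} = s_v u_{v,g} s_{g.v}^*$ (already recorded in the proof of Proposition \ref{prop2.7}) are what let us recognize generators as spanning elements — but these are routine and do not affect the structure of the argument.
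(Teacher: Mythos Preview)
Your proposal is correct and follows essentially the same strategy as the paper: show $M \subseteq I_H$ via the factorization $s_\mu u_{s(\mu),g} s_\nu^* = s_\mu s_{s(\mu)} u_{s(\mu),g} s_\nu^*$, verify that $M$ is a self-adjoint ideal containing the generators $\{s_v:v\in H\}$ (the paper simply declares this ``straightforward to check'' using Proposition~\ref{prop2.7}), and then read off the $\Z^k$-grading and diagonal-invariance directly from the explicit spanning set. If anything, you supply more of the detail---the role of $G$-hereditarity in keeping the source vertex inside $H$ under left multiplication and under taking adjoints---that the paper leaves implicit.
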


\begin{proof}
Denote by $J$ the right-hand side of (\ref{equ5.1}). The identity
$$s_\mu u_{s(\mu),g}s_\nu^*=s_\mu(s_{s(\mu)})u_{s(\mu),g}s_\nu^*$$
yields $J\subseteq I_H$. Also, using the description of $\ep$ in Proposition \ref{prop2.7}, it is straightforward to check that $J$ is an ideal of $\ep$. So, by $s_v=s_vu_{v,e_G}s_v^*$, $J$ contains all generators of $I_H$, and we have proved (\ref{equ5.1}).

Now, (\ref{equ5.1}) says that $I_H$ is spanned by its homogenous elements, hence it is a graded ideal. Moreover, let $a=\sum_{i=1}^l s_{\mu_i}u_{s(\mu_i),g_i}s_{\nu_i}^*\in I_H$ such that $s(\mu_i)=g\cdot s(\nu_i)\in H$. Then, in particular, each term of $a$ with $g_i=e_G$ belongs to $I_H$. Therefore, $\mathcal{E}(a)\in I_H$, and $I_H$ is diagonal-invariant.
\end{proof}

Let $H$ be a $G$-saturated $G$-hereditary subset of $\La^0$ and consider the $k$-subgraph $\La\setminus \La H$. Then the restricted action $G\curvearrowright \La\setminus \La H$ is well-defined, and hence $(G,\La\setminus \La H, \varphi|_{G\times \La\setminus \La H})$ is also a self-similar $k$-graph. So we have:

\begin{lem}\label{lem5.4}
Let $(G,\La)$ be a pseudo free self-similar $k$-graph. If $H$ is a $G$-saturated $G$-hereditary subset of $\La^0$, then $(G,\La\setminus\La H)$ is a pseudo free self-similar $k$-graph.
\end{lem}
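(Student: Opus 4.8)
The plan is to deduce pseudo freeness of $(G,\La\setminus\La H)$ directly from that of $(G,\La)$; the point is that the pseudo-free condition quantifies over $\mu$ in the path space, and in passing from $\La$ to $\La\setminus\La H$ we merely restrict to a smaller one, so the condition can only become weaker.

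Before invoking this, I would first make sure (in line with the paragraph preceding the lemma) that $(G,\La\setminus\La H)$ really is a row-finite self-similar $k$-graph without sources, since this is where the hypotheses on $H$ are used. The key observations are: (i) $H$ is $G$-invariant as a vertex set, because applying the $G$-hereditary property to a vertex $v$ gives $v\in H\Rightarrow g.v\in H$ for every $g\in G$, and applying the same implication to $g^{-1}$ and $g.v$ yields the converse $g.v\in H\Rightarrow v\in H$; (ii) $\La\setminus\La H=\{\mu\in\La:s(\mu)\notin H\}$ is closed under factorization, since the $G$-hereditary property with $g=e_G$ gives $r(\mu)\in H\Rightarrow s(\mu)\in H$, whose contrapositive propagates the condition $s(\cdot)\notin H$ to initial subpaths; (iii) $\La\setminus\La H$ is row-finite (inherited from $\La$) and has no sources, for if some $v\notin H$ satisfied $v(\La\setminus\La H)^{e_i}=\emptyset$ then $s(v\La^{e_i})\subseteq H$, and $G$-saturatedness with $n=e_i$ would force $v\in H$, a contradiction; and (iv) by (i) together with $s(g.\mu)=g.s(\mu)$ and $s(\varphi(g,\mu).\nu)=\varphi(g,\mu).s(\nu)$, the restricted action and the restricted cocycle $\varphi|_{G\times(\La\setminus\La H)}$ map $\La\setminus\La H$ into itself, so all the axioms of Definition \ref{defn2.2} hold on $\La\setminus\La H$ because they already hold as identities in $(G,\La)$.

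With this in hand, pseudo freeness is immediate. Suppose $g\in G$ and $\mu\in\La\setminus\La H$ satisfy $g.\mu=\mu$ and $\varphi(g,\mu)=e_G$. Viewing $\mu$ as an element of $\La$ and using that the action and cocycle on $\La\setminus\La H$ are the restrictions of those on $\La$, these same two equations hold in $(G,\La)$; since $(G,\La)$ is pseudo free (Definition \ref{defn5.1}), we conclude $g=e_G$. Hence $(G,\La\setminus\La H)$ is pseudo free. I do not expect a genuine obstacle here: the only steps needing a line of justification are (i)--(iii), each a short consequence of the definitions of $G$-hereditary and $G$-saturated, while the pseudo-free property itself transfers for free.
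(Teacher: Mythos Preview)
Your argument is correct and follows exactly the route the paper has in mind: the paper's own proof is simply ``The proof is straightforward,'' and you have accurately supplied the routine verifications (i)--(iv) that justify this, together with the immediate observation that pseudo freeness is inherited under restriction. There is nothing to add.
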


\begin{proof}
The proof is straightforward.
\end{proof}

\begin{lem}\label{lem5.5}
Let $H$ be a $G$-saturated $G$-hereditary subset of $\La^0$. For every $v\in \La^0$ and $r\in R\setminus\{0\}$, $rs_v\in I_H$ implies $v\in H$.
\end{lem}

\begin{proof}
Let $\{t_\mu, w_{v,g}\}$ be the generators of $\mathrm{EP}_R(G,\La\setminus\La H)$. If we define
$$S_\mu:=\left\{
    \begin{array}{ll}
      t_\mu & s(\mu)\notin H \\
      0 & \mathrm{otherwise}
    \end{array}
  \right.
\hspace{5mm} \mathrm{and} \hspace{5mm}
U_{v,g}:=\left\{
    \begin{array}{ll}
      w_{v,g} & v\notin H \\
      0 & \mathrm{otherwise,}
    \end{array}
  \right.
$$
then $\{S_\mu,U_{v,g}\}$ is a $(G,\La)$-family in $\mathrm{EP}_R(G,\La\setminus\La H)$, and by the universality, there is a $*$-homomorphism $\psi:\ep\rightarrow \mathrm{EP}_R(G,\La\setminus\La H)$ such that $\psi(s_\mu)=S_\mu$ and $\psi(u_{v,g})=U_{v,g}$ for all $\mu\in\La$, $v\in\La^0$ and $g\in G$. Since $\psi(s_v)=0$ for every $v\in H$, we have $I_H\subseteq \ker \psi$. On the other hand, Theorem \ref{steib-algeb-thm} implies that all $\psi(rs_v)=rt_v$ are nonzero for $v\in \La^0\setminus H$ and $r\in R\setminus\{0\}$.

Now assume $rs_v\in I_H$ for some $v\in \La^0$ and $r\in R\setminus\{0\}$. If $v\in \La^0\setminus H$, then $\psi(rs_v)=rt_v\neq 0$, and we get $rs_v\notin \ker \psi \supseteq I_H$, a contradiction.
\end{proof}

In fact, Lemma \ref{lem5.5} says that $I_H$ is a basic ideal with $H_{I_H}=H$ for every $G$-saturated $G$-hereditary subset $H$ of $\La^0$.

\begin{prop}\label{prop5.6}
Let $H$ be a $G$-saturated $G$-hereditary subset of $\La^0$. Let $\{t_\mu,w_{v,g}\}$ be the $(G,\La\setminus\La H)$-family generating $\mathrm{EP}_R(G,\La\setminus \La H)$. Then the map $\psi: \mathrm{EP}_R(G,\La\setminus \La H) \rightarrow \ep/I_H$ defined by
$$\psi(t_\mu w_{s(\mu),g} t_\nu^*):=s_\mu u_{s(\mu),g} s_\nu^* +I_H \hspace{5mm} (\mu,\nu\in \La\setminus \La H,~ g\in G)$$
is an ($R$-algebra) $*$-isomorphism.
\end{prop}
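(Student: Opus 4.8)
The plan is to construct $\psi$ via the universal property of $\mathrm{EP}_R(G,\La\setminus\La H)$, show it is a well-defined surjective $*$-homomorphism, and then establish injectivity by means of the graded uniqueness theorem (Theorem \ref{grad-uniq-thm}) applied to the self-similar $k$-graph $(G,\La\setminus\La H)$, which is pseudo free by Lemma \ref{lem5.4}. First I would define $S_\mu := s_\mu + I_H$ and $U_{v,g} := u_{v,g} + I_H$ for $\mu, v, g$ coming from $\La\setminus\La H$, and check that $\{S_\mu, U_{v,g}\}$ is an Exel-Pardo $(G,\La\setminus\La H)$-family in $\ep/I_H$. Relations (1)--(5) of Definition \ref{defn2.4} for $\{S,U\}$ follow immediately from the corresponding relations for $\{s_\mu, u_{v,g}\}$ in $\ep$ after passing to the quotient; the only point needing care is relation (CK4), since a sum $s_v = \sum_{\mu\in v\La^n} s_\mu s_\mu^*$ in $\ep$ contains terms $s_\mu s_\mu^*$ with $s(\mu)\in H$, and these must become zero in the quotient — but that is exactly the content of Lemma \ref{lem5.3}, equation (\ref{equ5.1}), which shows $s_\mu s_\mu^* \in I_H$ whenever $s(\mu)\in H$. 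Thus in $\ep/I_H$ we get $S_v = \sum_{\mu\in v(\La\setminus\La H)^n} S_\mu S_\mu^*$ (the sum now over edges staying outside $H$, using $G$-saturation to know $v\notin H$ forces $s(v(\La\setminus\La H)^n)\not\subseteq H$, so the relevant index set is as needed). The universal property of $\mathrm{EP}_R(G,\La\setminus\La H)$ then yields the $*$-homomorphism $\psi$ with the stated formula on spanning elements, using the description (\ref{eq2.1}).

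Next I would verify surjectivity. By Proposition \ref{prop2.7}, $\ep/I_H$ is spanned by elements $s_\mu u_{s(\mu),g} s_\nu^* + I_H$ with $s(\mu) = g.s(\nu)$. If $s(\mu) = g.s(\nu)\in H$, then by (\ref{equ5.1}) this element lies in $I_H$, hence is zero in the quotient. If $s(\mu)\notin H$, then by $G$-hereditariness applied to $\mu$ and $\nu$ (noting $s(\mu) = g.s(\nu)$ and $g^{-1}.s(\mu) = s(\nu)$), every vertex along $\mu$ and along $\nu$ lies outside $H$, so $\mu, \nu \in \La\setminus\La H$, and then $s_\mu u_{s(\mu),g} s_\nu^* + I_H = \psi(t_\mu w_{s(\mu),g} t_\nu^*)$. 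Hence $\psi$ is onto.

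For injectivity, I would apply Theorem \ref{grad-uniq-thm} to the $\Z^k$-graded $*$-homomorphism $\psi$ (graded since it respects degrees on generators, and both algebras carry the $\Z^k$-grading from Proposition \ref{prop2.7}). So suppose $a = \sum_{i=1}^l r_i w_{v,g_i} \in \mathrm{EP}_R(G,\La\setminus\La H)$ is nonzero, $v\in(\La\setminus\La H)^0$, $g_i^{-1}.v = g_j^{-1}.v$ for all $i,j$; we must show $\psi(a)\neq 0$. Now $\psi(a) = \sum_i r_i u_{v,g_i} + I_H$, so it suffices to show $\sum_i r_i u_{v,g_i} \notin I_H$. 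Here the map $\psi$ from Lemma \ref{lem5.5}'s proof — call it $\psi_0 : \ep \to \mathrm{EP}_R(G,\La\setminus\La H)$, sending $s_\mu, u_{v,g}$ to their "zero-outside-$H$" counterparts — comes in: $\psi_0$ kills $I_H$, so $\psi_0$ factors through $\ep/I_H$, and $\psi_0(\sum_i r_i u_{v,g_i}) = \sum_i r_i w_{v,g_i} = a \neq 0$ since $v\notin H$. Therefore $\sum_i r_i u_{v,g_i}\notin I_H$, giving $\psi(a)\neq 0$. By Theorem \ref{grad-uniq-thm}, $\psi$ is injective, hence an isomorphism.

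The main obstacle I anticipate is the careful bookkeeping in verifying that $\{S_\mu, U_{v,g}\}$ genuinely satisfies (CK4) in $\ep/I_H$ with the index set restricted to $\La\setminus\La H$: one needs both that the "bad" summands (with range or source in $H$) vanish modulo $I_H$ (Lemma \ref{lem5.3}) and that $G$-saturation guarantees, for $v\notin H$, the remaining summands still give all of $S_v$ — i.e. that $v(\La\setminus\La H)^n$ is the correct index set and $S_v\neq 0$ is consistent. The rest is a routine transfer of relations across the quotient together with an application of the two uniqueness-type tools already in hand (Theorem \ref{grad-uniq-thm} and the construction behind Lemma \ref{lem5.5}).
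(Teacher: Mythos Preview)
Your construction of $\psi$ and the surjectivity argument match the paper's (you are more explicit about (CK4), but the paper simply asserts the relations follow). The injectivity argument, however, takes a genuinely different route. The paper argues as follows: assuming $\sum_i r_i u_{v,g_i}\in I_H$ with distinct $g_i$ and $v\notin H$, it right-multiplies by $u_{g_j^{-1}.v,\,g_j^{-1}}$, applies the conditional expectation $\mathcal{E}$ (using that $I_H$ is diagonal-invariant, Lemma~\ref{lem5.3}) to obtain $r_j s_v\in I_H$, and then invokes Lemma~\ref{lem5.5} to force $r_j=0$. You instead observe that the homomorphism $\psi_0:\ep\to\mathrm{EP}_R(G,\La\setminus\La H)$ built in the proof of Lemma~\ref{lem5.5} annihilates $I_H$ and sends $\sum_i r_i u_{v,g_i}$ back to $a$, so $a\neq 0$ forces $\sum_i r_i u_{v,g_i}\notin I_H$.

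Your approach is cleaner: it avoids the conditional expectation $\mathcal{E}$ (which depends on the Steinberg algebra realization) and the diagonal-invariance of $I_H$. In fact your observation shows more than you use: the induced map $\overline{\psi_0}:\ep/I_H\to\mathrm{EP}_R(G,\La\setminus\La H)$ satisfies $\overline{\psi_0}\circ\psi=\mathrm{id}$ on generators, hence everywhere, so $\psi$ is injective outright and you do not even need Theorem~\ref{grad-uniq-thm}. (Relatedly, your invocation of Lemma~\ref{lem5.4} is harmless but unnecessary, since Theorem~\ref{grad-uniq-thm} does not require pseudo freeness.) The paper's route, on the other hand, keeps the argument internal to the ideal-theoretic machinery developed in Section~6 and illustrates how diagonal-invariance is used, which is the theme exploited again in Theorem~\ref{thm6.7}.
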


\begin{proof}
If we set $T_\mu:= s_\mu+I_H$ and $W_{v,g}:=u_{v,g}+I_H$ for every $v\in \La^0,~\mu\in\La$, and $g\in G$, then $\{T_\mu,W_{v,g}\}$ is a $(G,\La\setminus\La H)$-family in $\ep/I_H$ (the relations of Definition \ref{defn2.4} for $\{T_\mu, W_{v,g}\}$ immediately follow from those for $\{s_\mu,u_{v,g}\}$). So, the universality of $\mathrm{EP}_R(G,\La\setminus \La H)$ gives such $*$-homomorphism $\psi$. Note that $s_\mu\in I_H$ for each $\mu\in \La H$ by (\ref{equ5.1}), which gives the surjectivity of $\psi$.

To prove the injectivity, we apply the graded uniqueness theorem, Theorem \ref{grad-uniq-thm}. First, since $I_H$ is a $\Z^k$-graded ideal, $\ep/I_H$ has a natural $\Z^k$-grading and $\psi$ is a graded homomorphism. Thus, we fix an element in $\mathrm{EP}_R(G,\La\setminus \La H)$ of the form $a=\sum_{i=1}^l r_iw_{v,g_i}$ such that $v\in \La^0\setminus H$ and $g_i^{-1} \cdot v =g_j^{-1} \cdot v $ for all $1\leq i,j\leq l$. Without loss of generality, we may also suppose that the $g_i$'s are distinct. If $\psi(a)=0$, then $\psi(a)=\sum_{i=1}^l r_iu_{v,g_i}+I_H=I_H$ and $\sum_{i=1}^l r_iu_{v,g_i}\in I_H$. Thus, for each $1\leq j\leq l$, we have
$$\big(\sum_{i=1}^l r_iu_{v,g_i}\big)u_{g_j^{-1} \cdot v , g_j^{-1}}=\sum_{i=1}^l r_iu_{v,g_ig_j^{-1}}\in I_H  \hspace{5mm} (\mathrm{by ~ eq.~ (5) ~ in~ Definition~ \ref{defn2.4}})$$
and since $I_H$ is diagonal-invariant,
$$r_js_v=r_ju_{v,e_G}=\e(\sum_{i=1}^l r_iu_{v,g_ig_j^{-1}})\in I_H.$$
As $v\notin H$, Lemma \ref{lem5.5} forces $r_j=0$ for each $1\leq j\leq l$, hence $a=0$. Now Theorem \ref{grad-uniq-thm} implies that $\psi$ is an isomorphism.
\end{proof}

\begin{thm}\label{thm6.7}
Let $(G,\La)$ be pseudo free self-similar $k$-graph. Then $H\mapsto I_H$ is a one-to-one correspondence between $G$-saturated $G$-hereditary subsets of $\La^0$ and basic, $\Z^k$-graded and diagonal-invariant ideals of $\ep$, with inverse $I\mapsto H_I$.
\end{thm}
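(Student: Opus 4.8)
The plan is to establish that the two maps $H \mapsto I_H$ and $I \mapsto H_I$ are mutually inverse bijections between the two posets. The key observation is that all the hard work has already been done in Lemmas \ref{lem5.2}--\ref{lem5.5} and Proposition \ref{prop5.6}; what remains is to assemble these pieces and, crucially, to verify that for a \emph{basic, $\Z^k$-graded, diagonal-invariant} ideal $I$ one recovers $I = I_{H_I}$.

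First I would check that both maps are well-defined: by Lemma \ref{lem5.2}, $H_I$ is a $G$-saturated $G$-hereditary subset of $\La^0$ for any ideal $I$; by Lemma \ref{lem5.3}, $I_H$ is a $\Z^k$-graded diagonal-invariant ideal for any $G$-saturated $G$-hereditary $H$, and by Lemma \ref{lem5.5} it is also basic. Next I would show $H_{I_H} = H$. The inclusion $H \subseteq H_{I_H}$ is immediate since $s_v \in I_H$ for $v \in H$. For the reverse inclusion, if $v \in H_{I_H}$ then $s_v \in I_H$, so applying Lemma \ref{lem5.5} with $r = 1_R \neq 0$ (using that $R$ is a unital commutative $*$-ring, so $1_R \neq 0$) gives $v \in H$. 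This handles one composition.

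The main step is to show $I_{H_I} = I$ whenever $I$ is basic, $\Z^k$-graded, and diagonal-invariant. The inclusion $I_{H_I} \subseteq I$ is automatic: $I_{H_I}$ is generated by $\{s_v : v \in H_I\}$ and each such $s_v$ lies in $I$ by definition of $H_I$. For the reverse inclusion $I \subseteq I_{H_I}$, I would take a nonzero $a \in I$ and, using that $I$ is $\Z^k$-graded, reduce to the case where $a$ is homogeneous of some degree $n \in \Z^k$, so by equation (\ref{eq2.1}) we may write $a = \sum_{i=1}^l r_i s_{\mu_i} u_{s(\mu_i),g_i} s_{\nu_i}^*$ with $d(\mu_i) - d(\nu_i) = n$. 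Applying the same normalization trick used in the proof of Theorem \ref{grad-uniq-thm} (multiplying by suitable $s_\mu^*$ on the left and $s_\nu$ on the right, using (KP3) and (KP4)), I would extract, for each distinct pair $(\mu_j, \nu_j)$ appearing, the element $s_{\mu_j}^* a s_{\nu_j} = \sum_{i \in [j]} r_i u_{w_i, g_i} \in I$, where $w_i = s(\mu_i)$. Since $I$ is diagonal-invariant, $\e$ applied to this element — after the multiplication-by-$u_{g_j^{-1}.w, g_j^{-1}}$ trick from the proof of Proposition \ref{prop5.6} to push one term to the identity of $G$ — yields $r_j s_{w_j} \in I$ (combining terms with equal $g_i$ first so that no cancellation obscures a single surviving coefficient). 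Because $I$ is basic, $r_j s_{w_j} \in I$ with $r_j \neq 0$ forces $s_{w_j} \in I$, i.e. $w_j = s(\mu_j) \in H_I$. Then the term $r_j s_{\mu_j} u_{w_j, g_j} s_{\nu_j}^* = r_j s_{\mu_j}(s_{w_j}) u_{w_j,g_j} s_{\nu_j}^*$ lies in $I_{H_I}$ by the description (\ref{equ5.1}) in Lemma \ref{lem5.3}. Summing over $j$ gives $a \in I_{H_I}$, and by linearity and graded decomposition, $I \subseteq I_{H_I}$.

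The main obstacle I anticipate is the bookkeeping in the normalization/extraction step: one must be careful that combining terms with equal group elements $g_i$ (to avoid spurious cancellation of coefficients $r_i$ inside $\sum_{i\in[j]} r_i u_{w_i,g_i}$, given the constraint $g_i^{-1}.w_i = g_j^{-1}.w_j$ that makes these sums potentially collapse) is legitimate, and that the conditional expectation $\e$ picks out exactly the $e_G$-term after the $g_j^{-1}$-twist. Once that is in hand, monotonicity of both maps with respect to inclusion is routine (if $H \subseteq H'$ then $I_H \subseteq I_{H'}$ trivially from the generators; if $I \subseteq I'$ then $H_I \subseteq H_{I'}$ trivially), which upgrades the bijection to a poset isomorphism. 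Finally, I would remark that Proposition \ref{prop5.6} identifies the quotient $\ep / I_H$ with $\mathrm{EP}_R(G, \La \setminus \La H)$, which, while not strictly needed for the bijection statement, confirms the structural picture and could be cited to streamline the verification that no further collapse occurs.
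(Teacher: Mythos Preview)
Your proposal is correct but takes a somewhat different route from the paper. The paper establishes $I = I_{H_I}$ by working with quotients: it invokes Proposition~\ref{prop5.6} to identify $\ep/I_{H_I}$ with $\mathrm{EP}_R(G,\La\setminus\La H_I)$, then applies the graded uniqueness theorem (Theorem~\ref{grad-uniq-thm}) to the quotient map $q:\ep/I_{H_I}\to\ep/I$; verifying the hypothesis of Theorem~\ref{grad-uniq-thm} uses exactly the same $u_{\,\cdot\,,g_j^{-1}}$-multiplication and $\e$-extraction trick you describe, together with $I$ being basic and diagonal-invariant, to reach a contradiction with $v\notin H_I$. You instead bypass the quotient and the uniqueness theorem entirely, showing $I\subseteq I_{H_I}$ by a direct element-chase: you normalize a homogeneous $a\in I$ as in the proof of Theorem~\ref{grad-uniq-thm}, extract $s_{\mu_j}^* a\, s_{\nu_j}\in I$, and then use the same $\e$-trick to force $s(\mu_j)\in H_I$ whenever the corresponding coefficient is nonzero, so each surviving term lies in $I_{H_I}$ via~(\ref{equ5.1}). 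Your approach is more elementary and self-contained (it does not need Proposition~\ref{prop5.6} or Theorem~\ref{grad-uniq-thm} as black boxes), at the cost of re-running the normalization argument; the paper's approach is more structural, packaging the computation inside an application of its main uniqueness theorem. Both hinge on the identical core computation, and your bookkeeping concern about combining terms with equal $g_i$ within each $[j]$-class is handled correctly: once combined, distinctness of the $g_i$ ensures $\e$ isolates a single coefficient.
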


\begin{proof}
The injectivity of $H\mapsto I_H$ follows from Lemma \ref{lem5.5}. Indeed, if $I_H=I_K$ for $G$-saturated $G$-hereditary subsets $H,K\subseteq \La^0$, then Lemma \ref{lem5.5} yields that $H=H_{I_H}=H_{I_K}=K$.

To see the surjectivity, we take a basic, $\Z^k$-graded and diagonal-invariant ideal $I$ of $\ep$, and then prove $I=I_{H_I}$. Write $J:=I_{H_I}$ for convenience. By Proposition \ref{prop5.6} we may consider $\mathrm{EP}_R(G,\La\setminus\La H_I)\cong \ep/J$ as a $*$-$R$-algebra. Let $\{s_\mu,u_{v,g}\}$ and $\{t_\mu,w_{v,g}\}$ be the generators of $\ep$ and $\ep/J$, respectively. Since $J\subseteq I$, we may define the quotient map $q:\ep/J\rightarrow \ep/I$ such that
$$q(t_\mu)=s_\mu+I \hspace{5mm} \mathrm{and} \hspace{5mm} q(w_{v,g})=u_{v,g}+I$$
for all $\mu\in \La$, $v\in \La^0$ and $g\in G$. Notice that $q$ preserves the grading because $I$ is a $\Z^k$-graded ideal. So, we can apply Theorem \ref{grad-uniq-thm} to show that $q$ is an isomorphism. To do this, fix an element of the form $a=\sum_{i=1}^l r_iw_{v,g_i}$ with $v\in \La^0\setminus H_I$ such that $g_i^{-1} \cdot v =g_j^{-1} \cdot v $ for $1\leq i,j\leq l$ and $q(a)=0$. Then $\sum_{i=1}^l r_iu_{v,g_i}\in I$. As before, we may also assume that the $g_i$'s are distinct. Thus, for each $1\leq j\leq l$, we have
$$b_j:=\big(\sum_{i=1}^l r_iu_{v,g_i}\big)\big(u_{g_j^{-1} \cdot v ,g_j^{-1}}\big)=\sum_{i=1}^l r_iu_{v,g_ig_j^{-1}}\in I.$$
Since $I$ is diagonal-invariant and basic, the case $r_j\neq 0$ yields $r_js_v=r_ju_{v,e_G}=\mathcal{E}(b_j)\in I$, and thus $s_v\in I$ and $v\in H_I$, which contradicts the choice of $v$. It follows that $r_j$'s are all zero, and hence $a=0$. Now Theorem \ref{grad-uniq-thm} implies that $q$ is injective, or equivalently $I=J=I_{H_I}$ as desired.
\end{proof}

In the end, we remark the following about $Q(\N^k,G)\rtimes_{\mathcal{T}}\Z^k$-graded ideals of $\ep$.

\begin{rem}
Let $(G,\La)$ be a pseudo free self-similar $k$-graph and $\g$ be the associated groupoid. Denote by $\Gamma:=Q(\N^k,G)\rtimes_{\mathcal{T}}\Z^k$ the group introduced in Section \ref{sect5}. If we define $c:\g\rightarrow \Gamma$ by $c(x;\gamma;y):=\gamma$, then $c$ is a cocycle on $\g$ because $c(\alpha\beta)=c(\alpha)*_\Gamma c(\beta)$ for all $\alpha,\beta\in \g$ with $s(\alpha)=r(\beta)$. Hence, it induces a $\Gamma$-grading on $A_R(\g)=\ep$ with the homogenous components
$$A_\gamma:=\mathrm{span}_R\{1_V:V\subseteq c^{-1}(\gamma) \mathrm{~is~a~compact~open~bisection~}\}$$
(see \cite[Proposition 5.1]{cla18} for example). By a similar argument as in \cite[\S 6.5]{cla18} and combining Theorem \ref{steib-algeb-thm} and \cite[Theorem 5.3]{cla18}, we may obtain that the ideals of the form $I_H$, described in Theorem \ref{thm6.7} above, are precisely the basic, $\Gamma$-graded ideals of $\ep$.
\end{rem}


{\bf Acknowledgement.} The author is very grateful to the referees for the careful reading and valuable comments. This work was partially supported by Shahid Chamran University of Ahvaz under grant number SCU.MM1400.279.

\section*{Declarations}

{\bf Conflict of interest.} The author declares that he has no conflict of interest.


\end{document}